\documentclass[11pt]{article}
\usepackage{amsmath,amssymb,algorithm,xcolor,amsthm,a4wide,dsfont,subcaption,graphicx}
\usepackage{algpseudocode}
\usepackage[framemethod=TikZ]{mdframed}
\newcounter{theo}[section]
\renewcommand{\thetheo}{\arabic{section}.\arabic{theo}}
\newenvironment{theo}[2][]{%
	\refstepcounter{theo}%
	\ifstrempty{#1}%
	{\mdfsetup{%
			frametitle={%
				\tikz[baseline=(current bounding box.east),outer sep=0pt]
				\node[anchor=east,rectangle,fill=blue!20]
				{\strut Theorem~\thetheo};}}
	}%
	{\mdfsetup{%
			frametitle={%
				\tikz[baseline=(current bounding box.east),outer sep=0pt]
				\node[anchor=east,rectangle,fill=blue!20]
				{\strut Theorem~\thetheo:~#1};}}%
	}%
	\mdfsetup{innertopmargin=10pt,linecolor=blue!20,%
		linewidth=2pt,topline=true,%
		frametitleaboveskip=\dimexpr-\ht\strutbox\relax
	}
	\begin{mdframed}[]\relax%
		\label{#2}}{\end{mdframed}}
\newcounter{lem}[section] 
\renewcommand{\thelem}{\arabic{section}.\arabic{lem}}
\newenvironment{lem}[2][]{%
	\refstepcounter{lem}%
	\ifstrempty{#1}%
	{\mdfsetup{%
			frametitle={%
				\tikz[baseline=(current bounding box.east),outer sep=0pt]
				\node[anchor=east,rectangle,fill=green!20]
				{\strut Lemma~\thelem};}}
	}%
	{\mdfsetup{%
			frametitle={%
				\tikz[baseline=(current bounding box.east),outer sep=0pt]
				\node[anchor=east,rectangle,fill=green!20]
				{\strut Lemma~\thelem:~#1};}}%
	}%
	\mdfsetup{innertopmargin=10pt,linecolor=green!20,%
		linewidth=2pt,topline=true,%
		frametitleaboveskip=\dimexpr-\ht\strutbox\relax
	}
	\begin{mdframed}[]\relax%
		\label{#2}}{\end{mdframed}
	}
\newcounter{corr}[section] 
\renewcommand{\thecorr}{\arabic{section}.\arabic{corr}}
\newenvironment{corr}[2][]{%
	\refstepcounter{corr}%
	\ifstrempty{#1}%
	{\mdfsetup{%
			frametitle={%
				\tikz[baseline=(current bounding box.east),outer sep=0pt]
				\node[anchor=east,rectangle,fill=yellow!20]
				{\strut Corollary~\thecorr};}}
	}%
	{\mdfsetup{%
			frametitle={%
				\tikz[baseline=(current bounding box.east),outer sep=0pt]
				\node[anchor=east,rectangle,fill=yellow!20]
				{\strut Corollary~\thecorr:~#1};}}%
	}%
	\mdfsetup{innertopmargin=10pt,linecolor=yellow!20,%
		linewidth=2pt,topline=true,%
		frametitleaboveskip=\dimexpr-\ht\strutbox\relax
	}
	\begin{mdframed}[]\relax%
		\label{#2}}{\end{mdframed}}

\newcounter{cond}[section] 
\renewcommand{\thecond}{\arabic{section}.\arabic{cond}}

\author{Sadok Jerad\thanks{Mathematical Institute, Woodstock Road, University of Oxford, Oxford, UK, OX2 6GG.  \texttt{sadok.jerad@maths.ox.ac.uk}}}
\title{Fast Adaptive Tensor Methods Under Local Smoothness}

\newtheorem{assumption}{Assumption}
\newcommand{\Ren}{\mathbb{R}^n}
\newcommand{\eqdef}{\stackrel{\rm def}{=}}

\newcommand{\iibe}[2]{\{ #1, \ldots, #2 \}}

\newcommand{\maxkvarsigma}{\max(\varsigma, \|g_k\|)}
\newcommand{\maxivarsigma}{\max(\varsigma, \|g_i\|)}

\newcommand{\minhalfgk}{\min(\frac{1}{2}, \frac{\|g_k\|}{2 \varsigma})}
\numberwithin{equation}{section}
\newcommand{\minhalfgi}{\min(\frac{1}{2}, \frac{\|g_i\|}{2 \varsigma})}
\newcommand{\kap}[1]{\kappa_{\mbox{\tiny #1}}}
\newcommand{\calS}{{ \mathcal{S}}} 
\newcommand{\calI}{{\mathcal{I}}}

\newcommand{\sfrac}[2]{{\scriptstyle \frac{#1}{#2}}}
\begin{document}
	\maketitle
	\begin{abstract}
A new, fast adaptive regularization methods is proposed and analyzed under local Lipschitz smoothness of the $p$-th order tensor. For nonconvex problems, it achieves the optimal $\mathcal{O}\!\left(|\log(\epsilon)|\epsilon^{-(p+1)/p}\right)$ complexity to obtain first-order $\epsilon$-stationary points and in the convex case, it yields $\mathcal{O}\!\left(|\log(\epsilon)|\epsilon^{-1/p}\right)$ iterations to drive the optimality gap below $\epsilon$, thus matching the complexity bounds of standard tensor methods under global Lipschitz smoothness yp to logarithmic terms. The proposed algorithm follows the line of standard tensor methods with an appropriately chosen regularization and suitable modifications. Initial numerical experiments and comparisons for some nonconvex regression problems are made with the standard adaptive cubic regularization where we showcase some potential of the proposed method.	
\end{abstract}
\section{Introduction}

	Classical optimization methods, such as gradient descent and Newton's method, have been successfully applied to a wide variety of practical problems arising in many scientific and engineering disciplines. Their theoretical analysis, however, typically relies on the global Lipschitz continuity of a particular derivative (usually the gradient or the Hessian)~\cite{Cartis2022-wb}. This assumption does not hold even for simple one-dimensional functions such as polynomials and the exponential function. Although the iterates of an optimization algorithm often remain inside a bounded level set (provided the objective function decreases monotonically), global Lipschitz continuity is then satisfied only locally, and the associated Lipschitz constant may depend on the initialization and become arbitrarily large.
		In the convex setting, several works have investigated optimization methods under only local gradient Lipschitz continuity~\cite{MalMischenko24,ZhouMa25}, or by introducing notions of smoothness based on Bregman divergences~\cite{Bauschke2017,Lu2018}. For the nonconvex case, motivated by empirical observations on the loss landscapes of neural networks, Zhang et al.~\cite{Zhang2020Why} proposed the following generalized smoothness assumption for gradient descent under both deterministic and stochastic settings:
\begin{equation}\label{L0L1base}
	\|\nabla_x^2 f(x)\| \leq L_0 + L_1 \|\nabla_x^1 f(x)\|,
	\end{equation}
where $L_0, L_1 > 0$.

This assumption encompasses a broader class of functions, including univariate polynomials and exponential functions; see \cite{Zhang2020Why,Vankovetal25} for further examples and discussion. It has subsequently been extended to various first-order optimization frameworks, including stochastic optimization, variational inequalities, variance-reduced methods, and different step-size strategies; see \cite{pmlr-v195-faw23a,pmlr-v206-sun23d,hubler24a,ZhangFang20,Chenetal23,ReiszeiJadb25,Koloskova23,Leietal23} and the references therein.
Observe also  that \eqref{L0L1base} may be rewritten by only using first-order derivative as

\begin{equation}\label{L0L1grad}
\|\nabla_x^1 f(x) - \nabla_x^1 f(y)\| \leq (L_0 + L_1 \|\nabla_x^1 f(x)\|) \|x-y\|, \quad \quad \text{ if } \|x-y\| \leq \frac{1}{L_1},
\end{equation} 
see \cite{Leietal23,Vankovetal25} for discussions and relations between both \eqref{L0L1base} and \eqref{L0L1grad}.

Extending this framework to higher-order methods is particularly appealing, as such methods enjoy improved worst-case evaluation complexity over first-order algorithms \cite{BirgGardMartSantToin17,Nesterov2019}, faster local convergence rates \cite{Doikov2021}, and, in certain applications, superior practical performance compared with Newton-type methods \cite{cartis2025efficient}. Recently, the works of \cite{Xieyinu24,GraJerToin25} investigated second-order methods under Hessian Lipschitz conditions analogous to \eqref{L0L1grad}. However, the first work in \cite{Xieyinu24} requires prior knowledge of Lipschitz smoothness constant making the method non-adpative in addition to solving a trust-region sub-problem exactly at each iteration. \cite{GraJerToin25} instead proposed a second-order algorithm alternating between regularized Newton steps and negative-curvature directions, (with the same spirit as the method in \cite{GraJerToin24}) under local Lipschitz smoothness of the Hessian. The method is parameter-free and achieves a complexity of $\mathcal{O}\!\left(|\log (\epsilon)| \epsilon^{-3/2}\right)$ to reach an $\epsilon$- first order stationary point. More recently \cite{Semenovetal25} proposed a new smoothness condition for regularized Newton methods by describing the \emph{ maximum radius of a ball around the current point
	that yields a good relative approximation of the gradient field.} This condition was later applied to unify the analysis of various gradient-regularized convex Newton methods  \cite{Mishchenko2023,Doikov2021,DoikovMischNes24} and to provide an analysis of Gauss-Newton methods for some machine learning problems. 

Motivated by the local Hessian smoothness assumptions introduced in \cite{Xieyinu24,GraJerToin25}, we develop a new local Lipschitz condition framework for $p$-th order tensor methods together with a corresponding adaptive regularization algorithm. The proposed method follows the philosophy of classical adaptive regularization algorithms \cite{BirgGardMartSantToin17}, but introduces a new regularization term that naturally reflects the proposed smoothness condition.  Because the smoothness assumption is only local, additional acceptance tests are required beyond the classical ratio test as done in \cite{GraJerToin25}. For nonconvex optimization, the resulting algorithm achieves an evaluation complexity of
 $\mathcal{O}\!\left(|\log(\epsilon)| \epsilon^{-(p+1)/p}\right)$ complexity to reach an $\epsilon$-first order stationary point, thus retrieving the optimal complexity of tensor methods \cite{BirgGardMartSantToin17} (under global Lipschitz assumption) up to a logarithmic factor. In the convex regime, we retrieve a $\mathcal{O}\left(\epsilon^{-1/p} |\log(\epsilon)|\right)$ complexity to drive the optimality gap below the threshold $\epsilon$, which again matches the corresponding complexity of standard tensor methods up to logarithmic terms.

The paper is organized as follows. Section~\ref{Section2} starts by stating our new Lipschitz smoothness condition for the $p$th-order tensor, describes the general algorithmic, compares it with standard tensor methods, specify the algorithm for $p=1$ and states other properties. Section 3 derives
a bound on its worst-case complexity for reaching $\epsilon$ threshold of criticality. Subsection~\ref{nonconvexanalysis} focus on the nonconvex case and  Subsection~\ref{convexanalysis} deals with the convex one. Results are specified in each time for the case of interest $p=2$. The numerical behaviour of
the proposed algorithms is considered in Section~\ref{numeric-s} and some conclusions and perspectives are
provided in Section~\ref{concl-s}.

\vspace*{1em}

\textbf{Notations} Let $n \geq 1$. The symbol $\|. \|$ denotes the Euclidean norm for vectors in IRn
and
its associated subordinate norm for matrices and tensors.  For two vectors $x, y \in \mathbb{R}^n$,  $x^\intercal y$ denotes their inner
product. $I_n$ is the identity matrix in $\mathbb{R}^{n \times n}$
.

\section{Adaptive Regularization Methods with Local Smoothness}\label{Section2}
We consider the problem of finding approximate first-order critical points of the smooth unconstrained optimization problem 
\begin{equation}\label{problemf}
	\min_{x \in \Ren} f(x),
\end{equation}
under the following set of assumptions.
\begin{assumption}\label{assum1}
	$f$ is p times continuously differentiable.
\end{assumption}
\begin{assumption}\label{assum2}
	There exists a constant $f_{\rm low}$ such that $f(x) \geq f_{\rm low}$ for all $x \in \Ren$.
\end{assumption}
\begin{assumption}\label{assum3}
	There exist constants $L_0 \geq 0$ and $L_1 \geq 0$ and $\delta > 0$, such that if $\|x-y\| \leq \delta$
	\begin{equation}\label{Lippth}
		\| \nabla^p_x f(x) - \nabla^p_x f(y) \| \leq (L_0 + L_1 \|\nabla_x^1 f(x)\|) \|x-y\|.
	\end{equation}
\end{assumption}
\begin{assumption}\label{assum4}
	There exist constant $\{M_{0,i}\}_{i=2}^p$ and $\{M_{1,i}\}_{i=2}^p$ such that 
	\begin{equation}\label{boundtensor}
		\max_{\|u\|=1} - \nabla_x^{i} f(x)[u]^i \leq M_{0,i} + M_{1,i} \|\nabla_x^1 f(x)\| \textrm{ for all } i \in \iibe{2}{p}.
	\end{equation}
\end{assumption}

Assumption~\ref{assum1} and Assumption~\ref{assum2} are standard when studying $p$-th-order method. Assumption~\ref{assum3} is a weaker assumption than the standard global Lipschitz continuity of the $p$-th tensor that it is usually assumed \cite{BirgGardMartSantToin17,Nesterov2019}. In addition to its local character, we have also added a term in $\|\nabla_x^1 f(x)\|$ to allow larger growth of the $p$-th order tensor differences with respect to $\|\nabla_x^1 f(x)\|$. Note that for $p=1$, \eqref{Lippth} reduces to the newly introduced  $(L_0, \, L_1)$ smoothness condition of the gradient \eqref{L0L1grad} discussed in the Introduction. Assumption~\ref{assum4} is related to the bound on second-to higher tensors terms that was employed in \cite{GraJerToin24,OFFO-ARp} to develop Objective Function-Free adaptive tensor methods. Note that we are less restrictive as we even allow the bound to scales with gradient magnitude.

As done in \cite{Xieyinu24}, we now propose two conditions easier to verify that imply Assumption~\ref{assum3}. Since extensive works has already been developed for $(p=1), \, (L_0, L_1)$ smoothness \cite{Zhang2020Why,Vankovetal25,Leietal23}, we focus on $p \geq 2$.

\begin{lem}{caractAS3}
Suppose $p \geq 2$ and let $f$ be $p+1$ times differentiable and suppose there exists $M_0 , G_0 \geq 0$ and $M_1, G_1 > 0$ such that 
\begin{align}
\|\nabla_x^{p+1} f(x)\| &\leq M_0 + M_1 \|\nabla_x^1 f(x)\|, \label{pplusone} \\
\|\nabla_x^2 f(x)\| &\leq G_0 + G_1 \|\nabla_x^1 f(x)\| \label{L0L1}.
\end{align}
Then Assumption~\ref{assum3} is satisfied with $L_0 = M_0 + \frac{G_0 M_1}{G_1}$, $L_1 = 2M_1 $ and $\delta = \frac{1}{G_1}$.  
\end{lem}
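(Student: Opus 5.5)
The plan is to write the difference of $p$-th tensors as an integral of the $(p+1)$-st tensor along the segment from $x$ to $y$. I then control the gradient norm along that segment with a Grönwall argument based on \eqref{L0L1}. Averaging the resulting exponential bound over the segment is what produces the constant $2$ rather than $e$.

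Fix $x,y$ with $r:=\|x-y\|\le 1/G_1$, set $z_t=x+t(y-x)$ for $t\in[0,1]$, and let $\phi(t)=\|\nabla_x^1 f(z_t)\|$.

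\emph{Step 1 (gradient growth).} By the fundamental theorem of calculus and \eqref{L0L1},
\[
\phi(t)\le \phi(0)+\int_0^t \|\nabla_x^2 f(z_s)\|\,r\,ds\le \phi(0)+\int_0^t (G_0+G_1\phi(s))\,r\,ds .
\]
Set $\psi(t)=\phi(t)+G_0/G_1$. The inequality becomes $\psi(t)\le\psi(0)+G_1 r\int_0^t\psi(s)\,ds$, and the integral form of Grönwall's lemma (which avoids differentiating the norm) gives
\[
\psi(t)\le \psi(0)\,e^{G_1 r t}.
\]

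\emph{Step 2 (tensor difference).} Since $f$ is $p+1$ times differentiable, $\nabla_x^p f(y)-\nabla_x^p f(x)=\int_0^1\nabla_x^{p+1}f(z_t)[y-x]\,dt$. Hence, by \eqref{pplusone},
\[
\|\nabla_x^p f(y)-\nabla_x^p f(x)\|\le r\int_0^1\bigl(M_0+M_1\phi(t)\bigr)dt .
\]
Substituting Step 1 and writing $s=G_1r\le 1$, we get $\int_0^1\psi(t)\,dt\le\psi(0)\,(e^{s}-1)/s\le (e-1)\,\psi(0)$, because $(e^s-1)/s$ is increasing in $s$. Therefore
\[
\int_0^1 \phi(t)\,dt\le (e-1)\|\nabla_x^1 f(x)\| + (e-2)\frac{G_0}{G_1}.
\]

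\emph{Step 3 (constants).} Using $e-1\le 2$ and $e-2\le 1$, the bound from Step 2 is at most
\[
\Bigl(M_0+\tfrac{G_0M_1}{G_1}\Bigr)r+2M_1\|\nabla_x^1 f(x)\|\,r,
\]
which is exactly \eqref{Lippth} with $L_0=M_0+\frac{G_0 M_1}{G_1}$, $L_1=2M_1$ and $\delta=1/G_1$.

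The main obstacle is getting the constant $2M_1$ rather than $eM_1$. Using the crude pointwise bound $\phi(t)\le e\,\psi(0)-G_0/G_1$ inside the integral would only give $L_1=eM_1$. The fix is to integrate the exponential over $t\in[0,1]$ before bounding it, as done in Step 2. A smaller technical point is that $\phi$ need not be differentiable; this is why Step 1 uses the integral form of Grönwall's inequality rather than a differential one.
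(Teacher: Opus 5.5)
Your proof is correct and follows essentially the same route as the paper: write the $p$-th tensor difference as an integral of $\nabla_x^{p+1} f$ along the segment, bound the gradient there by $\|\nabla_x^1 f(x)\| + (G_0+G_1\|\nabla_x^1 f(x)\|)(e^{G_1 t r}-1)/G_1$, and integrate in $t$. The differences are cosmetic. The paper imports that exponential bound from Vankov et al.\ rather than deriving it by Gr\"onwall, and it linearizes with $e^u\le 1+2u$ on $[0,1]$ before integrating, which gives exactly the coefficients $2$ and $1$; you integrate the exponential directly and obtain the slightly sharper $e-1$ and $e-2$.
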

\begin{proof}
	As the proof is in the spirit of \cite[Lemma~C.1]{Xieyinu24}, we defer it to Appendix~\ref{firstmsproof}.
\end{proof}

As a consequence, it can be proved thanks to Lemma~2.1, that the class of functions satisfying Assumption~\ref{assum3} contains univariate polynomials of any degree and one dimensional exponentials. Note that all these functions satisfy also Assumption~\ref{assum4}. Indeed, for the aforementioned functions, the derivatives of higher degree grow (at infinity) slower than the first derivative. 

We now derove the impact of this new Lipschitz condition on the standard $p$-th order approximation of the function value and the gradient \cite{Cartis2022-wb}.
\begin{lem}{approxfungrad}
	Suppose that Assumption~\ref{assum1} and Assumption~\ref{assum3} hold. Let $x$ and $s$ such that $\|s\| \leq \delta$. Then, we have that
	\begin{equation}\label{Lipf}
		|f(x+s) - T_{f,p}(x,s)| \leq \frac{L_0 + L_1 \|\nabla_x^1 f(x)\| }{(p+1)!} \|s\|^{p+1},
	\end{equation}
	and 
	\begin{equation}\label{Lipg}
		\|\nabla_x^1 f(x+s) - \nabla_s^1 T_{f,p}(x,s)\| \leq \frac{L_0 + L_1 \|\nabla_x^1 f(x)\| }{p!} \|s\|^{p}.
	\end{equation}
\end{lem}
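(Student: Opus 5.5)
The plan is to follow the standard derivation of Taylor-remainder bounds under a Lipschitz $p$-th derivative, as in \cite{Cartis2022-wb,BirgGardMartSantToin17}. The one change is that the Lipschitz estimate \eqref{Lippth} is used only along the segment $[x,x+s]$, where it is valid because every point $x+\tau s$ with $\tau\in[0,1]$ satisfies $\|(x+\tau s)-x\| = \tau\|s\|\le\delta$. Here $T_{f,p}(x,s)=\sum_{j=0}^p \frac{1}{j!}\nabla_x^j f(x)[s]^j$.

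For the function value \eqref{Lipf}, I will use Taylor's theorem with integral remainder of order $p$, which Assumption~\ref{assum1} allows:
\[
f(x+s) = \sum_{j=0}^{p-1}\frac{1}{j!}\nabla_x^j f(x)[s]^j + \frac{1}{(p-1)!}\int_0^1 (1-\tau)^{p-1}\nabla_x^p f(x+\tau s)[s]^p\,d\tau .
\]
Since $\frac{1}{(p-1)!}\int_0^1(1-\tau)^{p-1}d\tau = \frac{1}{p!}$, subtracting $T_{f,p}(x,s)$ gives
\[
f(x+s)-T_{f,p}(x,s) = \frac{1}{(p-1)!}\int_0^1 (1-\tau)^{p-1}\big(\nabla_x^p f(x+\tau s)-\nabla_x^p f(x)\big)[s]^p\,d\tau .
\]
I will then bound the tensor difference by \eqref{Lippth}, applied with the base point $x$ and $y=x+\tau s$. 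This ordering matters: the factor on the right of \eqref{Lippth} involves $\|\nabla_x^1 f(x)\|$ at the first argument, so taking $x$ as the first argument makes the constant depend only on the current iterate's gradient, and symmetry of the norm difference lets us do this. The result is the bound $(L_0+L_1\|\nabla_x^1 f(x)\|)\tau\|s\|^{p+1}$. The remaining step is the Beta integral
\[
\frac{1}{(p-1)!}\int_0^1(1-\tau)^{p-1}\tau\,d\tau = \frac{1}{(p-1)!}\cdot\frac{1}{p(p+1)} = \frac{1}{(p+1)!},
\]
which yields \eqref{Lipf}.

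The gradient bound \eqref{Lipg} follows the same pattern applied to $\nabla_x^1 f$, which is $p-1$ times continuously differentiable. We have $\nabla_s^1 T_{f,p}(x,s)=\sum_{j=1}^p\frac{1}{(j-1)!}\nabla_x^j f(x)[s]^{j-1}$. Taylor's theorem for $\nabla^1_x f$ with integral remainder of order $p-1$ gives
\[
\nabla_x^1 f(x+s)-\nabla_s^1T_{f,p}(x,s) = \frac{1}{(p-2)!}\int_0^1(1-\tau)^{p-2}\big(\nabla_x^pf(x+\tau s)-\nabla_x^pf(x)\big)[s]^{p-1}\,d\tau .
\]
This is valid for $p\ge2$. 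For $p=1$ the expression is simply $\nabla_x^1 f(x+s)-\nabla_x^1 f(x)$, which is bounded directly by \eqref{Lippth}. Taking norms and using the subordinate tensor norm, $\|A[s]^{p-1}\|\le\|A\|\|s\|^{p-1}$, I get the bound $(L_0+L_1\|\nabla_x^1f(x)\|)\|s\|^p\frac{1}{(p-2)!}\int_0^1(1-\tau)^{p-2}\tau\,d\tau$. The integral equals $\frac{1}{(p-2)!\,(p-1)p} = \frac{1}{p!}$.

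No step is a genuine obstacle. The points needing care are keeping the Lipschitz estimate anchored at $x$, so that the gradient factor is $\|\nabla^1_x f(x)\|$ rather than the gradient at intermediate points, checking $\|\tau s\|\le\delta$, and handling the $p=1$ edge case separately.
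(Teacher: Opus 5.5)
Your proposal is correct and follows essentially the same route as the paper: integral-form Taylor remainders (the paper cites \cite[Theorem~A.7.1]{Cartis2022-wb}), the local Lipschitz bound \eqref{Lippth} applied with base point $x$ along the segment $[x,x+s]$, and the Beta integrals $\frac{1}{(p-1)!}\int_0^1 t(1-t)^{p-1}\,dt=\frac{1}{(p+1)!}$ and $\frac{1}{(p-2)!}\int_0^1 t(1-t)^{p-2}\,dt=\frac{1}{p!}$. Your separate treatment of the case $p=1$ in the gradient bound is a small improvement, since the paper's formula with $(p-2)!$ tacitly assumes $p\ge 2$.
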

\begin{proof}
	As the proof is close to standard $p$-th order Lipschitz error  \cite[Lemma~2.1]{CartGoulToin20b}, it is deferred to the Appendix~\ref{approxfungrad}.
\end{proof}

%Remark that \eqref{assum4} is weaker than imposing the same bound with  $\|\nabla_x^i f(x)\|$ in the left hand-side.
As standard in $p$-th-order approximation, we denote the $p$-th order Taylor approximation as 
\begin{equation}\label{Taylorapprox}
	T_{f,p}(x,s) \eqdef f(x) + \sum_{i=1}^{p} \frac{\nabla_x^i f(x)[s]^i}{i!}.
\end{equation}

Before stating our algorithm, we briefly review the standard adaptive regularization framework \cite{BirgGardMartSantToin17,Nesterov2019}. For $p$-th-order globally continuous Lipschitz function, the method builds at each method a regularized Taylor model which writes as 
\begin{equation}\label{standardmodel}
	T_{f,p}(x_k,s_k) + \frac{\sigma_{k} \|s_k\|^{p+1}}{(p+1)!}. 
\end{equation}
However this model only works for globally-Lipschitz continuous $p$-th order function, and a decrease is ensured whenever $\sigma_{k}$ is larger than the global Lipschitz constant. In our case, we  adjust  the model's definition to account for the new smoothness condition by incorporating first-order information in the regularization. Our model therefore writes as 
\begin{equation}\label{model}
	m_k(s_k) = T_{f,p}(x_k,s_k) + \frac{\sigma_{k} \max(\varsigma, \|\nabla_x^1 f(x_k)\|) \|s_k\|^{p+1}}{(p+1)!},
\end{equation}

where $ \varsigma > 0$ and $T_{f,p}$ defined in \eqref{Taylorapprox}. Observe that the current model \eqref{model}  is more conservative than \eqref{standardmodel} because it imposes stronger regularization where the norm of the gradient is large. Intuitively, it is in accordance with the local smoothness condition \eqref{Lippth} where the gradient intervenes in the left hand side of the bound.

\noindent
We are now in a position to state our \texttt{ARp-LS} algorithm. 

\begin{algorithm}
	\caption{Adaptive $p$-th order regularization with Local Smoothness \texttt{ARp-LS}}\label{ARp-LS}
	\begin{algorithmic}[1]
		\Require An initial point $x_0 \in \Ren$, $\sigma_0 > 0$, $\epsilon \in (0,1]$  are given, as well as the parameters
		\[
		\sigma_{\min}  > 0, \,  \, \vartheta  \geq 1, \, 1 \geq \varsigma > 0, \kappa_\theta \geq 1, \, \theta_1 \geq 1 
		\]
		\[
		0< \gamma_1 < 1 < \gamma_2 \leq \gamma_3 
		\textrm{ and }
		0 < \eta_1 \leq \eta_2 < 1.  
		\]
		Define also  $\kap{sup} \eqdef \frac{\kappa_\theta + \theta_1}{p!}$, $\kap{upgrad} \eqdef \frac{\vartheta + \theta_1}{p!}$ and \texttt{REJECT = FALSE}. 
		\State $k \gets 0$ and compute $g_0 = \nabla_x^1 f(x_0)$.
		 \While{$\|g_k\| \geq \epsilon$}
		\State Compute the true derivatives $\nabla^i_x f(x_k)$ for $i \in \iibe{2}{p}$.
		\State Compute an approximate minimizer of the model defined in \eqref{model} in the sense that
		\begin{equation}\label{minimizmodel}
			m_k(s_k) - m_k(0) < 0, \quad \|\nabla_s^1 T_{f,p}(x_k,s_k)\| \leq \theta_1 \frac{\maxkvarsigma \|s_k\|^p}{p!}
		\end{equation}
		\State If 
		\begin{equation}\label{teststeplength}
			\|\nabla_x^1 f(x_k + s_k)\| > \frac{\|g_k\|}{2}, \quad \textrm{ and } \quad \kap{sup} \sigma_k \|s_k\|^p < \min(\frac{1}{2}, \frac{\|g_k\|}{2 \varsigma}),  
		\end{equation}
		set \texttt{REJECT = TRUE} and proceed to Line~\ref{updatestep}. \label{Line5}
		\State If 
		\begin{equation}\label{testnextgrad}
			\|\nabla_x^1 f(x_k + s_k)\| > \kap{upgrad} \maxkvarsigma \sigma_k \|s_k\|^p, 
		\end{equation}
		set \texttt{REJECT = TRUE} and proceed to Line~\ref{updatestep}. \label{Line6}
		\State 
		Evaluate $f(x_k +s_k)$ and compute the acceptance ratio
		\begin{equation}\label{rhokdef}
				\rho_k = \frac{f(x_k) - f(x_k +s_k)}{f(x_k) - T_{f,p}(x_k,s_k)}.
		\end{equation}
		If $\rho_k < \eta_1$, set \texttt{REJECT = TRUE}.
		
		\State If \texttt{REJECT = FALSE}, set $x_{k+1} = x_k + s_k$, otherwise set $x_{k+1} = x_k$.                     \label{updatestep}
		\State Set
		\begin{equation}\label{sigmakupdate}
			\sigma_{k+1} \in \left\{
			\begin{aligned}
				& [ \max\left(\sigma_{\min}, \gamma_1\sigma_k \right),\sigma_k] 
				&&\text{ if } \rho_k \geq \eta_2 
				&&\text{and } \texttt{REJECT=FALSE}, \\
				& [\sigma_k ,\gamma_2\sigma_k ]            
				&&\text{ if } \rho_k \in [\eta_1, \eta_2) &&\text{and } \texttt{REJECT=FALSE},\\
				& [\gamma_2 \sigma_{k}, \gamma_3 \sigma_{k}] 
				&&\text{ if } \rho_k < \eta_1 
				&&\text{and } \texttt{REJECT=TRUE}, 
			\end{aligned}
			\right.
		\end{equation}
		Increment $k$ by one, set \texttt{REJECT = FALSE} and compute $g_k = \nabla_x^1 f(x_k)$. 
		\EndWhile
	\end{algorithmic}
\end{algorithm}
As our algorithm differs from standard tensor methods \cite{BirgGardMartSantToin17,Cartis2022-wb,Nesterov2019}, we provide some explanation on the algorithm. 
In addition to the standard test of the sufficient decrease on the function value \eqref{rhokdef}, we add two additional tests. These two tests are required since the Lipschitz assumption is now local. The first tests ensures that the step ensures progress in two different senses. Either by dividing the norm of the gradient  at the next iterate (first part of \eqref{teststeplength}) by half or by obtaining a step that is sufficiently large (second part of \eqref{teststeplength}). We have also to ensure that no blow-up occurs at the next iterate and that the gradient at the next iterate remains bounded \eqref{testnextgrad}, a new theoretical condition that has been considered for regularized Newton method \cite{GraJerToin24,Mishchenko2023}. Note that these tests are not new and that they have also been considered in \cite{GraJerToin25} for a fast second-order method under local Lipschitz-smoothness.  For each of the three acceptance conditions imposed on the step, we will show in Section~\ref{Complexity-s} that for sufficiently large $\sigma_k$, all three tests (\eqref{testnextgrad}, \eqref{updatestep}, \eqref{rhokdef}) will be passed and \texttt{REJECT} remains \texttt{FALSE} and the iterate will be updated accordingly.  

Observe also that for $p=1$ and by solving exactly \eqref{model}, the step writes as
\begin{equation}\label{clippedgradient}
s_k = -\frac{\min(\frac{1}{\varsigma}, \frac{1}{\|g_k\|})}{\sigma_k} g_k
\end{equation}  
which is the clipped gradient descent \cite{Zhang2020Why,Koloskova23} which was shown to be optimal under the $(L_0, L_1)$ smoothness. Note that in our case, we propose a parameter-free framework as done in \cite{hubler24a}. In the last reference, a complexity of $\mathcal{O}\left(\epsilon^{-2}\right)$ was obtained to retrieve an $\epsilon$-first order stationary point.

Regarding the approximate minimization of the model in \eqref{minimizmodel}, the required conditions  follow the proposal of \cite{GratToin21} and they  extend the more usual conditions where the step $s_k$ is chosen as 
\[
\|\nabla_s^1 m_k(s_k)\| \leq \theta_1 \|s_k\|^p.
\]
Indeed, it is easy to verify that  \eqref{minimizmodel} holds on a local minimizer of $m_k$ with $\theta_1 > 1$. 

Following the standard notation
 of adaptive regularization methods \cite{BirgGardMartSantToin17} and taking into account the new conditions imposed in order to accept the step, we define the following notations

\[
\calS \eqdef \{ k \geq 0 \mid x_{k+1} = x_k+s_k \}, 
\]
the set of indexes of ``successful iterations'', and
\[
\calS_k \eqdef \calS \cap \iibe{0}{k}.
\]
Moreover, considering \eqref{teststeplength}, we further the iterations into two different subsets
\begin{align}
	\calI^{g\searrow} &
	\eqdef \left\{i \geq 0  \, \Big| \; \|\nabla_x^1f(x_i+s_i)\| \leq \frac{\|g_i\|}{2} \, \, \right\}, 
	\quad
	\calI^{g\nearrow} \eqdef \mathbb{N} \setminus \calI^{g\searrow}, \label{calIipdown} \\
	%\calI^{decr} &=  \left\{i\in \calI^{newt} \, | \,  \|\nabla_x^1f(x_i+s_i^{trial})\| > \frac{\|g_i\|}{2} \tim{and} \, \|s_i^{trial}\| \geq \frac{1}{\sqrt{\sigma_{i}}\kap{slow}} \right\}, \\ 	
	\calI^{decr} &\eqdef  \left\{i\in \calI^{g\nearrow} \, \Big|  \; \kap{sup} \sigma_{i} \|s_i\|^p \geq  \min(\frac{1}{2}, \frac{\|g_i\|}{2 \varsigma}) \right\} \label{calIdecr},
\end{align} 
the last subset containing the indices of iterations where both conditions in \eqref{teststeplength} fail.
The corresponding subsets of successful iterations are then given by
\[
	\calS_k^{g\searrow} \eqdef \calS_k \cap \calI^{g\searrow}, \quad \quad
\calS_k^{decr} \eqdef \calS_k \cap \calI^{decr}.
\]
Since the iteration is unsuccessful if the test \eqref{teststeplength} holds, one checks that
\begin{equation}\label{Skdivision}
\calS_k \eqdef \calS_k^{g\searrow} \cup \calS_k^{decr}.
\end{equation}

We also recall a well-known result bounding the total number of
iterations of adaptive regularization methods in
terms of the number of successful ones.

\begin{lem}{SvsU}
Suppose that the \texttt{ARp-LS} algorithm is used and that $\sigma_k \leq
\sigma_{\max}$ for some $\sigma_{\max} >0$. Then
\[
k \leq |\mathcal{S}_k| \left(1+\frac{|\log\gamma_1|}{\log\gamma_2}\right)+
\frac{1}{\log\gamma_2}\log\left(\frac{\sigma_{\max}}{\sigma_0}\right).
\]
\end{lem}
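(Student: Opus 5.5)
The argument is the usual counting of multiplicative updates of $\sigma_k$ in \eqref{sigmakupdate}. We split $\iibe{0}{k-1}$ into the successful iterations $\calS_k$ (those with \texttt{REJECT = FALSE}) and the unsuccessful ones $\calU_k \eqdef \iibe{0}{k-1}\setminus \calS_k$, so that $k \le |\calS_k| + |\calU_k|$. We then bound $|\calU_k|$ in terms of $|\calS_k|$ by tracking how $\log \sigma$ moves along the iterations.

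First we read off the one-step factors from \eqref{sigmakupdate}. On a successful iteration, either $\sigma_{j+1}\ge \max(\sigma_{\min},\gamma_1\sigma_j)\ge \gamma_1\sigma_j$ or $\sigma_{j+1}\ge\sigma_j$; in both cases $\sigma_{j+1}\ge\gamma_1\sigma_j$. On an unsuccessful iteration, \texttt{REJECT = TRUE}, and $\sigma_{j+1}\ge \gamma_2\sigma_j$.

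One subtlety needs care. The third line of \eqref{sigmakupdate} is written with ``$\rho_k<\eta_1$ and \texttt{REJECT=TRUE}''. However, a rejection may come from \eqref{teststeplength} or \eqref{testnextgrad}, in which case $\rho_k$ is never computed. We interpret, as the algorithm clearly intends, that every rejected iteration uses the third branch. This is the only point where we rely on interpretation rather than a direct reading.

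Multiplying the factors over $j=0,\dots,k-1$ gives
\[
\sigma_{\max} \;\ge\; \sigma_k \;\ge\; \sigma_0\,\gamma_1^{|\calS_k|}\,\gamma_2^{|\calU_k|}.
\]
Taking logarithms and using $\log\gamma_1=-|\log\gamma_1|$ and $\log\gamma_2>0$, we get
\[
|\calU_k| \le \frac{|\log\gamma_1|}{\log\gamma_2}\,|\calS_k| + \frac{1}{\log\gamma_2}\log\left(\frac{\sigma_{\max}}{\sigma_0}\right).
\]
Adding $|\calS_k|$ to both sides yields the stated bound.

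The remaining issue is bookkeeping: $\calS_k$ as defined includes index $k$, whereas the product above runs over $j\le k-1$. Counting over $\iibe{0}{k-1}$ gives $k = |\calS_{k-1}|+|\calU_k|$. Since $|\calS_{k-1}|\le|\calS_k|$, replacing it with the (possibly larger) $|\calS_k|$ only strengthens the inequality in the right direction. So the main, minor, obstacle is just this consistent indexing together with the interpretation of the rejection branch. No smoothness assumptions are needed.
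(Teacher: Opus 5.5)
Your proposal is correct and is the standard counting argument the paper cites (Birgin et al., Theorem~2.4; Cartis--Gould--Toint, Lemma~2.4.1): you bound $\sigma_k \ge \sigma_0\gamma_1^{|\calS_k|}\gamma_2^{|\mathcal{U}_k|}$, take logarithms, and add $|\calS_k|$. Your explicit handling of the off-by-one indexing, and your reading that rejections triggered by \eqref{teststeplength} or \eqref{testnextgrad} (where $\rho_k$ is never computed) use the $[\gamma_2\sigma_k,\gamma_3\sigma_k]$ branch, are sensible clarifications of details the paper leaves implicit.
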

\begin{proof}
	See  	\cite[Theorem~2.4]{BirgGardMartSantToin17} or \cite[Lemma~2.4.1]{Cartis2022-wb}.
\end{proof}

This result implies that the overall complexity of the algorithm can be
estimated once bounds on $\sigma_k$ and $|\mathcal{S}_k|$ are known, as we will show in the
next section.

We now derive  upper bounds on the stepsize for all iterations. Although the proof follows the classical analysis of adaptive regularization methods \cite{OFFO-ARp,CartGoulToin19}, we keep the proof in the main text as it shows in detail the impact of both the new tensors conditions \eqref{assum4}  and the new choice of the regularization term \eqref{model}.

\begin{lem}{skbound}
	Suppose that Assumption~\ref{assum1} and Assumption~\ref{assum4} hold. Then, for all $k \geq 0$,
	\begin{equation}\label{exprskbound}
		\|s_k\| \leq \frac{\mu}{\sigma_{k}^\sfrac{1}{p}} 
	\end{equation}
	with $\mu$ defined as
	\begin{equation}\label{mudef}
		\mu \eqdef 2\max \left((p+1)!^\sfrac{1}{p}, \left\{ \sigma_{\min}^\sfrac{-i+1}{p(p-i+1)} \left(\frac{(\frac{M_{0,i}}{\varsigma} + M_{1,i}) (p+1)!}{i!}\right)^\frac{1}{p-i+1}\right\}_{i=2}^p\right).
	\end{equation}
\end{lem}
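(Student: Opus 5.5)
The starting point is the descent requirement in \eqref{minimizmodel}, $m_k(s_k) < m_k(0) = f(x_k)$. Written out with the definition \eqref{model} and $g_k = \nabla_x^1 f(x_k)$, this gives
\[
\frac{\sigma_k \max(\varsigma,\|g_k\|)\|s_k\|^{p+1}}{(p+1)!} < -g_k^\intercal s_k - \sum_{i=2}^p \frac{\nabla_x^i f(x_k)[s_k]^i}{i!}.
\]
I bound each term on the right using only the norm of $s_k$. For the first one, Cauchy--Schwarz gives $-g_k^\intercal s_k \le \|g_k\|\|s_k\|$. For the others, Assumption~\ref{assum4} applied with $u = s_k/\|s_k\|$ gives $-\nabla_x^i f(x_k)[s_k]^i \le (M_{0,i}+M_{1,i}\|g_k\|)\|s_k\|^i$. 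Dividing by $\max(\varsigma,\|g_k\|)$ and using $\|g_k\|/\max(\varsigma,\|g_k\|)\le 1$ and $(M_{0,i}+M_{1,i}\|g_k\|)/\max(\varsigma,\|g_k\|) \le M_{0,i}/\varsigma + M_{1,i}$, I obtain, after cancelling one power of $\|s_k\|$ (the case $s_k=0$ is excluded by the strict inequality),
\[
\frac{\sigma_k\|s_k\|^p}{(p+1)!} < 1 + \sum_{i=2}^p \frac{M_{0,i}/\varsigma + M_{1,i}}{i!}\|s_k\|^{i-1}.
\]
This step is where the gradient-scaled regularization pays off: it removes every dependence on $\|g_k\|$.

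The right-hand side has $p$ terms, so the left-hand side is smaller than $p$ times the largest of them. This leads to a case split. If the constant $1$ is the largest term, then $\sigma_k\|s_k\|^p < p\,(p+1)!$, which gives $\|s_k\| \le (p(p+1)!)^{1/p}\sigma_k^{-1/p}$. If instead the $i$-th term is the largest, then $\sigma_k\|s_k\|^{p-i+1} < p(p+1)!\,C_i/i!$ with $C_i = M_{0,i}/\varsigma+M_{1,i}$. Taking the $(p-i+1)$-th root gives $\|s_k\| \le \sigma_k^{-1/(p-i+1)}(p(p+1)!C_i/i!)^{1/(p-i+1)}$.

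The main obstacle is converting the exponent $\sigma_k^{-1/(p-i+1)}$ into the form $\sigma_k^{-1/p}\mu$ required by \eqref{exprskbound}. I would write
\[
\sigma_k^{-\frac{1}{p-i+1}} = \sigma_k^{-\frac1p}\,\sigma_k^{-\frac{i-1}{p(p-i+1)}} \le \sigma_k^{-\frac1p}\,\sigma_{\min}^{\frac{-i+1}{p(p-i+1)}},
\]
which uses $\sigma_k \ge \sigma_{\min}$ (true for $k\ge1$ from \eqref{sigmakupdate}; for $k=0$ one would assume $\sigma_0\ge\sigma_{\min}$). This produces exactly the $\sigma_{\min}$ factor in \eqref{mudef}. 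The remaining work is checking the constants: the factor $p^{1/p}$ or $p^{1/(p-i+1)}$ should be absorbed by the leading $2$ in $\mu$. If that fails for large $p$, I would instead argue as follows. Whenever $\|s_k\|$ exceeds each of the individual candidate bounds in $\mu/2$, each right-hand term is at most a fixed fraction of the left-hand side, and this yields a contradiction. With the stated constants, the factor $2$ appears to be the intended slack, and the statement should follow.
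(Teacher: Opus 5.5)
\textbf{Gap: the constant in the case split.} Your reduction to
\[
\frac{\sigma_k\|s_k\|^p}{(p+1)!} < 1+\sum_{i=2}^p \frac{C_i}{i!}\|s_k\|^{i-1}, \qquad C_i=\frac{M_{0,i}}{\varsigma}+M_{1,i},
\]
is what the paper does. So is your final conversion $\sigma_k^{-1/(p-i+1)}\le \sigma_k^{-1/p}\sigma_{\min}^{-(i-1)/(p(p-i+1))}$. The gap is the step in between. Your main argument bounds the right-hand side by $p$ times its largest term, and this does not give the factor $2$ in $\mu$. When the $i$-th term dominates you get a factor $p^{1/(p-i+1)}$. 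For $i=p$ this factor equals $p$, which exceeds $2$ as soon as $p\ge 3$. More generally it exceeds $2$ whenever $p-i+1<\log_2 p$. So for $p\ge 3$ this route proves the lemma only with a larger $\mu$; it does work for $p=2$.

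\textbf{Fixing the fallback.} Your fallback is the right fix, but as phrased it is not yet an argument. A uniform fraction $\theta$ for all $p$ terms would need $\theta<1/p$. That reproduces exactly the $p^{1/(p-i+1)}$ loss of the case split. What works is that the exponents are all distinct, so the fractions come out geometric. Set $A=\sigma_k/(p+1)!$, $r_0=A^{-1/p}$ and $r_i=\bigl(C_i/(i!\,A)\bigr)^{1/(p-i+1)}$. Suppose $\|s_k\|\ge 2r_j$ for every $j$. Then
\[
1\le 2^{-p}A\|s_k\|^p, \qquad \frac{C_i}{i!}\|s_k\|^{i-1}\le 2^{-(p-i+1)}A\|s_k\|^p .
\]
Since $p-i+1$ runs over $\{1,\ldots,p-1\}$, the right-hand side is at most $(1-2^{-p})A\|s_k\|^p<A\|s_k\|^p$, which is a contradiction. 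Hence $\|s_k\|<2\max_j r_j$. This is exactly the Lagrange/Fujiwara root bound the paper cites from Yap. With it, the rest of your argument gives the stated $\mu$.

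\textbf{A minor point.} The bound $\sigma_k\ge\sigma_{\min}$ is not automatic for $k\ge 1$ either unless $\sigma_0\ge\sigma_{\min}$. Only the first branch of \eqref{sigmakupdate} involves $\sigma_{\min}$, so for example $\sigma_1=\sigma_0<\sigma_{\min}$ is allowed on a moderately successful iteration. The paper tacitly assumes $\sigma_0\ge\sigma_{\min}$ as well.
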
 
\begin{proof}
	Rearranging the fact that $m_k(s_k) < m_k(0)$ and using \eqref{boundtensor}, we derive that
	\begin{align*}
		\frac{\sigma_{k} \max(\varsigma, \|g_k\|) \|s_k\|^{p+1}}{(p+1)!} &\leq -g_k^\intercal s_k + \sum_{i=2}^p - \frac{\nabla_x^i f(x_k)[s_k]^i}{i!}\\
		&\leq \|g_k\| \|s_k\| + \sum_{i=2}^p \frac{M_{0,i} + M_{1,i} \|g_k\|}{i!} \|s_k\|^i.
	\end{align*}
Applying now the Lagrange bound for polynomial roots \cite[Lecture~VI, Lemma~5]{Yap99} with $x=\|s_k\|^{p+1}$, $n=p+1$, $a_0=0$, $a_1 = -\|g_k\|$, $a_i = \frac{-M_{0,i} - M_{1,i} \|g_k\|}{i!}$ for $i \in \iibe{2}{p}$, and $a_{p+1} = \frac{\sigma_{k} \maxkvarsigma}{(p+1)!}$, it follows that  the equation $\sum_{i=0}^{n} a_i x_i$ admits at least one positive root, and we derive
\[
\|s_k\| \leq 2\max\left( \left(\frac{\|g_k\|(p+1)!}{\sigma_{k} \maxkvarsigma}\right)^\sfrac{1}{p},  \left\{ \left(\frac{(M_{0,i} + M_{1,i} \|g_k\|) (p+1)!}{i! \sigma_{k} \maxkvarsigma}\right)^\frac{1}{p-i}\right\}_{i=2}^p        \right).
\]
Now factorizing by $\frac{1}{\sigma_{k}^\sfrac{1}{p}}$ in the left hand side, using that $\frac{\|g_k\|}{\maxkvarsigma} \leq 1$ and that $\frac{(M_{0,i} + M_{1,i} \|g_k\|) }{\maxkvarsigma} \leq \frac{M_{0,i}}{\varsigma} + M_{1,i}$, we derive that 
\[
\|s_k\| \leq \frac{2}{\sigma_{k}^\sfrac{1}{p}} \max\left( (p+1)!^\sfrac{1}{p},  \left\{ \sigma_{k}^\sfrac{-i+1}{p(p-i+1)} \left(\frac{(\frac{M_{0,i}}{\varsigma} + M_{1,i}) (p+1)!}{i!}\right)^\frac{1}{p-i+1}\right\}_{i=2}^p        \right).
\]
Now using that  $-i+1 < 0$ for $i \in \iibe{2}{p}$ and that $\sigma_{k} \geq \sigma_{\min}$ from \eqref{sigmakupdate}, we derive \eqref{exprskbound} with the appropriate $\mu$ \eqref{mudef}. 
\end{proof}

Lemma~\ref{skbound} is crucial for our \texttt{ARp-LS} for $p$-th-order with local smoothness, as it shows that if $\sigma_{k} \geq (\frac{\mu}{\delta})^p$, \eqref{exprskbound} implies that $\|s_k\| \leq \delta$ and so the Lipschitz error bounds stated in \eqref{Lipf} and \eqref{Lipg} apply.

We next establish a standard bound on the lower bound for the decrease of the function values for successful iteration, a standard result in adaptive regularization methods \cite{BirgGardMartSantToin17}.

\begin{lem}{lowdecr}
Suppose that Assumption~\ref{assum1} holds and let $k \geq 0$. Then, we have that
\begin{equation}\label{locdecr}
	f(x_k) - T_{f,p}(x_k,s_k) \geq \frac{\sigma_{k} \maxkvarsigma \|s_k\|^{p+1}}{(p+1)!}.
\end{equation}
\end{lem}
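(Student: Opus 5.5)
The bound \eqref{locdecr} should follow from the first condition in \eqref{minimizmodel}, namely $m_k(s_k) - m_k(0) < 0$. No smoothness assumption is needed, which matches the fact that only Assumption~\ref{assum1} appears in the statement.

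The plan is to write out the model \eqref{model} at $s=0$ and at $s=s_k$. By \eqref{Taylorapprox}, $T_{f,p}(x_k,0) = f(x_k)$, and the regularization term vanishes at $s=0$, so $m_k(0) = f(x_k)$. Substituting into $m_k(s_k) < m_k(0)$ gives
\[
T_{f,p}(x_k,s_k) + \frac{\sigma_k \maxkvarsigma \|s_k\|^{p+1}}{(p+1)!} < f(x_k).
\]
Rearranging yields $f(x_k) - T_{f,p}(x_k,s_k) > \frac{\sigma_k \maxkvarsigma \|s_k\|^{p+1}}{(p+1)!}$. This is in fact a strict version of \eqref{locdecr}, which implies the non-strict inequality claimed.

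The only point needing care is that $m_k(s_k) < m_k(0)$ is guaranteed at every iteration $k$, including unsuccessful ones. This holds because Line~4 of \texttt{ARp-LS} computes $s_k$ satisfying \eqref{minimizmodel} before any of the acceptance tests are applied. I do not expect any real obstacle.

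As a by-product, when $s_k \neq 0$ the denominator of $\rho_k$ in \eqref{rhokdef} is strictly positive, so $\rho_k$ is well defined. The step cannot be zero, since $m_k(0) - m_k(0) = 0$ would violate the strict inequality in \eqref{minimizmodel}.
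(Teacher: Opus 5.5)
Your proof is correct and is the paper's own argument. The paper states the bound is a direct consequence of the first inequality in \eqref{minimizmodel}, and you make that explicit by noting $m_k(0)=f(x_k)$ and rearranging $m_k(s_k)<m_k(0)$.
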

\begin{proof}
	This is a consequence of the first inequality in \eqref{minimizmodel}.
\end{proof}

Before proceeding with our analysis, we prove a Lemma that upper-bounds $	\|\nabla_x^1 f(x_k + s_k)\|$, provided that the step is sufficiently small for \eqref{Lipg} to apply. 

\begin{lem}{gkplusonebound}
Let $k \geq 0$, suppose that Assumption~\ref{assum1} and Assumption~\ref{assum3} hold and that $\|s_k\| \leq \delta$. Then, we have that
\begin{equation}\label{exprgkplusonebound}
	\|\nabla_x^1 f(x_k + s_k)\| \leq \frac{\maxkvarsigma \sigma_{k} \|s_k\|^p}{p!} \left(\frac{L_0 + L_1 \|g_k\|}{\sigma_{k} \maxkvarsigma} + \theta_1\right).
\end{equation}

\end{lem}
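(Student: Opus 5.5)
The bound follows from the triangle inequality, splitting the gradient at the trial point into a Taylor-error term and the gradient of the Taylor model. Write
\[
\|\nabla_x^1 f(x_k+s_k)\| \leq \|\nabla_x^1 f(x_k+s_k) - \nabla_s^1 T_{f,p}(x_k,s_k)\| + \|\nabla_s^1 T_{f,p}(x_k,s_k)\|.
\]
Since $\|s_k\|\le\delta$ and Assumptions~\ref{assum1} and~\ref{assum3} hold, Lemma~\ref{approxfungrad}, specifically \eqref{Lipg}, bounds the first term:
\[
\|\nabla_x^1 f(x_k+s_k) - \nabla_s^1 T_{f,p}(x_k,s_k)\| \le \frac{L_0+L_1\|g_k\|}{p!}\|s_k\|^p.
\]
The second term is controlled directly by the approximate-minimization condition \eqref{minimizmodel}, which gives
\[
\|\nabla_s^1 T_{f,p}(x_k,s_k)\|\le \theta_1\frac{\maxkvarsigma\|s_k\|^p}{p!}.
\]

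Adding the two estimates yields
\[
\|\nabla_x^1 f(x_k+s_k)\| \le \frac{\|s_k\|^p}{p!}\bigl(L_0+L_1\|g_k\| + \theta_1\maxkvarsigma\bigr).
\]
We then factor out $\frac{\maxkvarsigma\,\sigma_k\|s_k\|^p}{p!}$. This is legitimate because $\sigma_k\ge\sigma_{\min}>0$ by \eqref{sigmakupdate} and $\maxkvarsigma\ge\varsigma>0$. After factoring, the bracket becomes $\frac{L_0+L_1\|g_k\|}{\sigma_k\maxkvarsigma}+\frac{\theta_1}{\sigma_k}$.

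This last term does not match the displayed statement, which has $\theta_1$ rather than $\theta_1/\sigma_k$. The statement is therefore literally implied only when $\sigma_k\ge 1$, or when \eqref{minimizmodel} is read with the regularization weight $\sigma_k$ included, i.e.\ $\|\nabla_s^1T_{f,p}\|\le\theta_1\sigma_k\maxkvarsigma\|s_k\|^p/p!$. That reading is consistent with how $\kap{upgrad}$ and $\kap{sup}$ are used in \eqref{testnextgrad} and \eqref{teststeplength}.

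So the argument rests on \eqref{minimizmodel} carrying a factor $\sigma_k$. With that factor the $\theta_1$ term factors out as exactly $\theta_1$ and the proof closes as stated. Without it, as printed, the argument only gives a $\theta_1/\sigma_k$ term, which is at most $\theta_1$ only when $\sigma_k\ge1$, so the stated bound cannot be obtained for all $\sigma_k\ge\sigma_{\min}$ unless one assumes $\sigma_{\min}\ge 1$. The estimation itself is routine; this normalization is the only real point.
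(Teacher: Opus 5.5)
Your argument is correct and is the paper's proof: triangle inequality, \eqref{Lipg} for the Taylor-error term, the second condition in \eqref{minimizmodel} for $\|\nabla_s^1 T_{f,p}(x_k,s_k)\|$, then factoring. You are also right about the normalization: the paper's proof silently uses that condition in the form $\|\nabla_s^1 T_{f,p}(x_k,s_k)\| \leq \theta_1 \sigma_k \max(\varsigma,\|g_k\|)\|s_k\|^p/p!$, which is the version that holds at a local minimizer of $m_k$ (as the paper later asserts), so the printed \eqref{minimizmodel} is missing a factor $\sigma_k$ and your reading is the intended one.
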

\begin{proof}
Remark that since $\|s_k\| \leq \delta$, \eqref{Lipg} applies. Using the latter, with the second inequality of \eqref{minimizmodel}, we derive that
\begin{align*}
	\|\nabla_x^1 f(x_k + s_k)\| &\leq \|\nabla_x^1 f(x_k + s_k) - \nabla_s^1 T_{f,p}(x_k,s_k)\| + \|\nabla_s^1T_{f,p}(x_k,s_k)\| \\ 
	&\leq \frac{L_0 + L_1 \|g_k\|}{p!} \|s_k\|^p + \frac{\theta_1 \sigma_{k} \maxkvarsigma \|s_k\|^p}{p!}.
\end{align*} 
Factorizing in the previous inequality yields the desired result.
\end{proof}
We now turn to the complexity analysis, it will be divided into two cases. A first one for the generic non-convex case and a second  tailored for the convex one.

\section{Complexity analysis}\label{Complexity-s}
\subsection{Non-convex case} \label{nonconvexanalysis}
In the remainder of this subsection, we will proof two lemmas that show for sufficiently large $\sigma_{k}$, the algorithm bypasses
 both tests \eqref{teststeplength} and \eqref{testnextgrad} and proceeds to compute the standard acceptance ratio.
\begin{lem}{firsttest}
Suppose that Assumption~\ref{assum1} and Assumption~\ref{assum3} hold and that $\|s_k\| \leq \delta$. Then if 
\begin{equation}\label{sigmakfirstcond}
	\sigma_{k} \geq \frac{\frac{L_0}{\varsigma} + L_1}{\kappa_\theta},
\end{equation}
then \texttt{REJECT} remains \texttt{FALSE} after Line~\ref{Line5}.
\end{lem}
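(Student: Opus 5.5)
We argue by contradiction. Suppose that \texttt{REJECT} is set to \texttt{TRUE} at Line~\ref{Line5}, so that both inequalities in \eqref{teststeplength} hold. We will show that they cannot hold simultaneously under \eqref{sigmakfirstcond}. Since $\|s_k\| \leq \delta$, Lemma~\ref{gkplusonebound} applies and gives \eqref{exprgkplusonebound}.

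The first step is to control the bracketed factor in \eqref{exprgkplusonebound}. Because $\maxkvarsigma \geq \varsigma$ and $\maxkvarsigma \geq \|g_k\|$, we have
\[
\frac{L_0 + L_1\|g_k\|}{\maxkvarsigma} \leq \frac{L_0}{\varsigma} + L_1 \leq \kappa_\theta \sigma_k,
\]
where the last inequality is \eqref{sigmakfirstcond}. Hence the bracket is at most $\kappa_\theta + \theta_1$, and by the definition of $\kap{sup}$,
\[
\|\nabla_x^1 f(x_k+s_k)\| \leq \kap{sup}\, \sigma_k \maxkvarsigma \|s_k\|^p .
\]

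The second step combines this with the second part of \eqref{teststeplength}, namely $\kap{sup}\sigma_k\|s_k\|^p < \min(\frac{1}{2}, \frac{\|g_k\|}{2\varsigma})$. This gives
\[
\|\nabla_x^1 f(x_k+s_k)\| < \maxkvarsigma \, \minhalfgk .
\]
We then split into two cases. If $\|g_k\| \geq \varsigma$, the right-hand side is at most $\|g_k\| \cdot \frac{1}{2}$. If $\|g_k\| < \varsigma$, it is at most $\varsigma \cdot \frac{\|g_k\|}{2\varsigma} = \frac{\|g_k\|}{2}$. In both cases $\|\nabla_x^1 f(x_k+s_k)\| < \|g_k\|/2$, which contradicts the first part of \eqref{teststeplength}. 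Therefore the test at Line~\ref{Line5} fails and \texttt{REJECT} remains \texttt{FALSE}.

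The only delicate point is the product identity $\maxkvarsigma \cdot \minhalfgk \le \|g_k\|/2$, and the case split above handles it.
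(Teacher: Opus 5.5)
Your proof is correct and is essentially the paper's argument. Both invoke Lemma~\ref{gkplusonebound}, bound the bracket by $\kappa_\theta+\theta_1$ using $L_0+L_1\|g_k\| \leq (\frac{L_0}{\varsigma}+L_1)\maxkvarsigma$ together with \eqref{sigmakfirstcond}, and rely on the identity $\frac{\|g_k\|}{2\maxkvarsigma} = \minhalfgk$. The only difference is direction: the paper assumes the first part of \eqref{teststeplength} and shows the second fails, whereas you assume the second part and contradict the first, which is the same implication.
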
 
\begin{proof}
For $k \in 	\calI^{g\searrow}$, we have that $\|\nabla_x^1 f(x_k +s_k)\| \leq \frac{\|g_k\|}{2}$ from \eqref{calIipdown} and so the first inequality in \eqref{teststeplength} is not true for any value of $\sigma_{k}$. We now consider $k \in 	\calI^{g\nearrow}$ and therefore $\frac{\|g_k\|}{2} \leq \|\nabla_x^1 f(x_k+s_k)\|$. Using Lemma~\ref{gkplusonebound} since $\|s_k\| \leq \delta$, we derive
\[
\frac{\|g_k\|}{2} \leq \|\nabla_x^1 f(x_k+s_k)\| \leq \frac{\maxkvarsigma \sigma_{k} \|s_k\|^p}{p!} \left(\frac{L_0 + L_1 \|g_k\|}{\sigma_{k} \maxkvarsigma} + \theta_1\right).
\]
Dividing by $\maxkvarsigma$ the l.h.s, using that $L_0 + L_1 \|g_k\| \leq (\frac{L_0}{\varsigma} + L_1) \maxkvarsigma$  and  the upper-bound on $\sigma_{k}$ \eqref{sigmakfirstcond}, we derive that
\[
\minhalfgk = \frac{\|g_k\|}{2\maxkvarsigma} \leq \frac{\sigma_{k} \|s_k\|^p}{p!} (\kappa_\theta + \theta_1) = \kap{sup} \sigma_{k} \|s_k\|^p,
\] 
where $\kap{sup}$ is defined in the initialization of \texttt{ARp-LS}. From the previous inequality, the second inequality of \eqref{teststeplength} cannot hold and so \texttt{REJECT} remains false.
\end{proof}

We next consider the test \eqref{testnextgrad} and we similarly prove that for sufficiently large $\sigma_{k}$, Algorithm~\ref{ARp-LS} continues beyond Line~\ref{Line6}.

\begin{lem}{sectest}
Suppose that Assumption~\ref{assum1} and Assumption~\ref{assum3} hold and that $\|s_k\| \leq \delta$. If 
\begin{equation}\label{sigmaksecondcond}
	\sigma_{k} \geq \frac{\frac{L_0}{\varsigma} + L_1}{\vartheta}, 
\end{equation}	
then \texttt{REJECT} stay \texttt{FALSE} after Line~\ref{Line6}.
\end{lem}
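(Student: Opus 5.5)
The approach mirrors the proof of Lemma~\ref{firsttest}: since $\|s_k\| \leq \delta$, Lemma~\ref{gkplusonebound} applies, and I will use it to show directly that the test \eqref{testnextgrad} fails. Concretely, \eqref{exprgkplusonebound} gives
\[
\|\nabla_x^1 f(x_k+s_k)\| \leq \frac{\maxkvarsigma \sigma_{k} \|s_k\|^p}{p!} \left(\frac{L_0 + L_1 \|g_k\|}{\sigma_{k} \maxkvarsigma} + \theta_1\right).
\]

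The first step is to bound the fraction inside the parenthesis uniformly in $\|g_k\|$. I will use the elementary inequality $L_0 + L_1\|g_k\| \leq \left(\frac{L_0}{\varsigma} + L_1\right)\maxkvarsigma$, which holds because $L_0 \leq \frac{L_0}{\varsigma}\maxkvarsigma$ and $\|g_k\| \leq \maxkvarsigma$. This gives
\[
\frac{L_0 + L_1 \|g_k\|}{\sigma_{k} \maxkvarsigma} \leq \frac{\frac{L_0}{\varsigma} + L_1}{\sigma_{k}} \leq \vartheta,
\]
where the last inequality is exactly the hypothesis \eqref{sigmaksecondcond}.

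Substituting this back yields
\[
\|\nabla_x^1 f(x_k+s_k)\| \leq \frac{\vartheta + \theta_1}{p!}\maxkvarsigma\sigma_k\|s_k\|^p = \kap{upgrad}\maxkvarsigma\sigma_k\|s_k\|^p.
\]
Hence the strict inequality in \eqref{testnextgrad} cannot hold, so Line~\ref{Line6} does not set \texttt{REJECT} to \texttt{TRUE}. I do not expect a genuine obstacle here. The only points requiring care are that the bound on $\frac{L_0+L_1\|g_k\|}{\maxkvarsigma}$ must hold in both regimes $\|g_k\|\le\varsigma$ and $\|g_k\|>\varsigma$, and that the constant $\kap{upgrad}$ comes out exactly as defined in the initialization of \texttt{ARp-LS}.

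The statement should also be read with the same convention as Lemma~\ref{firsttest}: if Line~\ref{Line5} had already rejected, the algorithm jumps to Line~\ref{updatestep}. The claim therefore concerns only the test performed at Line~\ref{Line6}, and it is to be combined with Lemma~\ref{firsttest} when $\sigma_k$ satisfies both \eqref{sigmakfirstcond} and \eqref{sigmaksecondcond}.
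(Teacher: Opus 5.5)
Your proposal is correct and follows essentially the same route as the paper: invoke Lemma~\ref{gkplusonebound}, bound $L_0 + L_1\|g_k\| \leq (\frac{L_0}{\varsigma}+L_1)\maxkvarsigma$, use \eqref{sigmaksecondcond} to replace the ratio by $\vartheta$, and conclude via the definition of $\kap{upgrad}$ that the strict inequality \eqref{testnextgrad} cannot hold. Your explicit justification of that elementary inequality in both regimes $\|g_k\|\le\varsigma$ and $\|g_k\|>\varsigma$ is a small detail the paper leaves implicit.
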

\begin{proof}
Using Lemma~\ref{gkplusonebound} since $\|s_k\| \leq \delta$, we derive 
\[
	\|\nabla_x^1 f(x_k + s_k)\| \leq \frac{\maxkvarsigma \sigma_{k} \|s_k\|^p}{p!} \left(\frac{L_0 + L_1 \|g_k\|}{\sigma_{k} \maxkvarsigma} + \theta_1\right).
\]
Now using \eqref{sigmaksecondcond} with $L_0 + L_1 \|g_k\| \leq (\frac{L_0}{\varsigma} + L_1) \maxkvarsigma$ in the last inequality,
\[
	\|\nabla_x^1 f(x_k + s_k)\| \leq \frac{\maxkvarsigma \sigma_{k} \|s_k\|^p}{p!} \left(\vartheta + \theta_1\right).
\]
  The definition of $\kap{upgrad}$  in the initialization of \texttt{ARp-LS} with the last inequality implies that \eqref{testnextgrad} is not valid and \texttt{REJECT} stays false and the algorithm proceeds beyond Line~\ref{Line6}. 
\end{proof}

Thanks to the two previous Lemmas~(\ref{sigmakfirstcond}, \ref{sigmaksecondcond}) and Lemma~\ref{skbound}, we are now ready to develop a bound on $\sigma_{k}$ for all iterations $k$.

\begin{lem}{sigmakbound}
Suppose that Assumption~\ref{assum1}, Assumption~\ref{assum3} and Assumption~\ref{assum4} hold and let $k \geq 0$. Then, we have that 
\begin{equation}\label{exprsigmakbound}
	\sigma_{k} \leq  \sigma_{\max} \eqdef\gamma_3 \max\left(\sigma_0, \, \frac{\frac{L_0}{\varsigma} + L_1}{\vartheta}, \frac{\frac{L_0}{\varsigma} + L_1}{\kappa_\theta},\, \frac{\mu^p}{\delta^p}, \, \frac{\frac{L_0}{\varsigma} + L_1}{(1-\eta_2)} \right),
\end{equation}
where $\mu$ is defined in \eqref{mudef}.
\end{lem}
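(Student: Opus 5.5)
The plan is the standard argument for adaptive regularization: exhibit a threshold $\bar\sigma$ such that whenever $\sigma_k \geq \bar\sigma$ the iteration is successful and $\sigma_{k+1}\leq\sigma_k$, and then argue by induction.

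Set
\[
\bar\sigma \eqdef \max\left(\frac{\frac{L_0}{\varsigma} + L_1}{\vartheta}, \frac{\frac{L_0}{\varsigma} + L_1}{\kappa_\theta},\, \frac{\mu^p}{\delta^p}, \, \frac{\frac{L_0}{\varsigma} + L_1}{(1-\eta_2)}\right).
\]
Fix $k$ with $\sigma_k \geq \bar\sigma$. First, $\sigma_k\geq \mu^p/\delta^p$, so Lemma~\ref{skbound} gives $\|s_k\|\leq \mu\sigma_k^{-1/p}\leq\delta$. This makes Lemma~\ref{approxfungrad}, Lemma~\ref{firsttest} and Lemma~\ref{sectest} available.

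Since $\sigma_k$ dominates the thresholds \eqref{sigmakfirstcond} and \eqref{sigmaksecondcond}, these two lemmas show that \texttt{REJECT} is still \texttt{FALSE} after Lines~\ref{Line5} and~\ref{Line6}. It remains to show $\rho_k\geq\eta_2$. Using \eqref{Lipf} together with Lemma~\ref{lowdecr}, I estimate
\[
|1-\rho_k| = \frac{|f(x_k+s_k)-T_{f,p}(x_k,s_k)|}{f(x_k)-T_{f,p}(x_k,s_k)} \leq \frac{(L_0+L_1\|g_k\|)\|s_k\|^{p+1}}{\sigma_k\maxkvarsigma\|s_k\|^{p+1}} \leq \frac{\frac{L_0}{\varsigma}+L_1}{\sigma_k}\leq 1-\eta_2 .
\]
The middle step uses $L_0+L_1\|g_k\|\leq(\frac{L_0}{\varsigma}+L_1)\maxkvarsigma$. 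Hence $\rho_k\geq\eta_2$, the iteration is successful, and \eqref{sigmakupdate} gives $\sigma_{k+1}\leq\sigma_k$. The degenerate case $s_k=0$ cannot occur, because \eqref{minimizmodel} demands the strict decrease $m_k(s_k)<m_k(0)$.

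For the induction, suppose $\sigma_j\leq\sigma_{\max}=\gamma_3\max(\sigma_0,\bar\sigma)$ for all $j\leq k$. There are two cases.
\begin{itemize}
\item If $\sigma_k\geq\bar\sigma$, the argument above gives $\sigma_{k+1}\leq\sigma_k\leq\sigma_{\max}$.
\item If $\sigma_k<\bar\sigma$, then whatever happens, \eqref{sigmakupdate} gives $\sigma_{k+1}\leq\gamma_3\sigma_k<\gamma_3\bar\sigma\leq\sigma_{\max}$, since $\gamma_3\geq\gamma_2>1$.
\end{itemize}
The base case is $\sigma_0\leq\gamma_3\sigma_0$.

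The only delicate step is the ratio estimate. There the regularization weight $\maxkvarsigma$ has to cancel the gradient-dependent Lipschitz constant, and this is exactly what the model choice \eqref{model} was designed to ensure. Beyond that, I only need to check that each lemma's hypothesis $\|s_k\|\leq\delta$ is secured before the lemma is applied.
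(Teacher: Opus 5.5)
Your proof is correct and follows the paper's argument. You take $\sigma_k\ge(\mu/\delta)^p$ to get $\|s_k\|\le\delta$, use Lemmas~\ref{firsttest} and~\ref{sectest} for the two extra tests, bound $1-\rho_k$ through \eqref{Lipf} and \eqref{locdecr} to get $\rho_k\ge\eta_2$, and conclude from \eqref{sigmakupdate}. Your explicit induction and the remark that $s_k\neq 0$ just spell out what the paper compresses into ``the mechanism of the algorithm.''
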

\begin{proof}
First suppose that $\sigma_{k} \geq (\frac{\mu}{\delta})^p$ so that $\|s_k\| \leq \delta$ from \eqref{exprskbound} and the results of Lemma~\ref{approxfungrad} applies. Using the definition of \eqref{rhokdef}, Lipschitz error bound \eqref{Lipf} and  \eqref{locdecr}, we derive that,
\begin{equation}\label{rhokbound}
	1-\rho_k = \frac{f(x_k + s_k) - T_{f,p}(x_k,s_k)}{f(x_k) - T_{f,p}(x_k,s_k)} \leq \frac{(L_0 + L_1 \|g_k\|) \|s_k\|^{p+1}}{\sigma_{k} \maxkvarsigma \|s_k\|^{p+1}} = \frac{(L_0 + L_1 \|g_k\|)}{\sigma_{k} \maxkvarsigma }.
\end{equation}
And thus if
\[
\sigma_{k} \geq \frac{\frac{L_0}{\varsigma} + L_1}{1-\eta_2},
\]
plugging the last bound  in \eqref{rhokbound} yields that $\rho_k \geq \eta_2$ and so the step would be successful. Now using that $\|s_k\| \leq \delta$ so the results of both Lemma~\ref{firsttest} and Lemma~\ref{sectest} apply, we obtain that the step is successful provided that $\sigma_{k} \geq \max\left(\frac{\frac{L_0}{\varsigma} + L_1}{\vartheta}, \frac{\frac{L_0}{\varsigma} + L_1}{\kappa_\theta}\, \frac{\mu^p}{\delta^p}, \, \frac{\frac{L_0}{\varsigma} + L_1}{(1-\eta_2)}\right)$. At last, the mechanism of the algorithm \eqref{sigmakupdate} ensures that \eqref{exprsigmakbound} applies.
\end{proof}

We now provide a lemma that upper-bounds  $|\calS_k^{g\searrow}|$ w.r.t $|\calS_k^{decr}|$. The Lemma is in spirit of \cite[Lemma~3.4]{GraJerToin24}. This Lemma highlights the importance of \eqref{testnextgrad} as it upper-bounds $\frac{\|g_{k+1}\|}{\|g_k\|}$ even when the local Lipschitz error bounds does not hold.

\begin{lem}{Skgbound}
Suppose that Assumption~\ref{assum1}, Assumption~\ref{assum3} and Assumption~\ref{assum4} hold. Then, we have that 
\begin{equation}\label{Skgboundexpr}
|\calS_k^{g\searrow}| \leq \log(\frac{\kap{up}}{\epsilon}) \frac{|\calS_k^{decr}|}{\log(2)} + \frac{|\log(\epsilon)| + \log(\|g_0\|)}{\log(2)} + 1,
\end{equation} 
where $\kap{up}$ is defined as
\begin{equation}\label{kapupdef}
	\kap{up} =  \kap{upgrad}  \mu^p
\end{equation}
with $\kap{upgrad}$ defined in the initialization of \texttt{ARp-LS} and $\mu$ as in \eqref{mudef}.
\end{lem}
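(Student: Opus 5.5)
The plan is to track $\log\|g_k\|$ along successful iterations. Unsuccessful iterations leave $x_k$, and hence $g_k$, unchanged. Every successful iteration lies in $\calS_k^{g\searrow}$ or $\calS_k^{decr}$ by \eqref{Skdivision}, so it suffices to bound how $\|g\|$ changes across each type.

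\textbf{Step 1: ratio bounds per iteration type.} For $i\in\calS_k^{g\searrow}$ the definition \eqref{calIipdown} gives $\|g_{i+1}\|\le \|g_i\|/2$, so $\log\|g_{i+1}\|\le \log\|g_i\|-\log 2$. For $i\in\calS_k^{decr}$ we cannot use the local Lipschitz bounds, since $\|s_i\|$ may exceed $\delta$. Instead, acceptance means \eqref{testnextgrad} failed, so
\[
\|g_{i+1}\|\le \kap{upgrad}\max(\varsigma,\|g_i\|)\,\sigma_i\|s_i\|^p .
\]
Lemma~\ref{skbound} gives $\sigma_i\|s_i\|^p\le \mu^p$, hence $\|g_{i+1}\|\le \kap{up}\max(\varsigma,\|g_i\|)$. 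Since $i$ is an iteration before termination, $\|g_i\|\ge\epsilon$. Together with $\varsigma\le 1$ this yields $\max(\varsigma,\|g_i\|)\le \|g_i\|/\epsilon$, and therefore
\[
\log\|g_{i+1}\|\le \log\|g_i\|+\log\!\big(\tfrac{\kap{up}}{\epsilon}\big).
\]
(If $\varsigma\le\|g_i\|$ the ratio is at most $\kap{up}$, which is at most $\kap{up}/\epsilon$ as $\epsilon\le1$. Otherwise the factor is $\varsigma/\|g_i\|\le 1/\epsilon$.)

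\textbf{Step 2: telescoping.} Sum these increments over the successful iterations in $\iibe{0}{k}$, excluding the last one. All iterates $x_j$ with $j\le k$ satisfy $\|g_j\|\ge\epsilon$. Let $j$ be the last successful index in $\calS_k$; then $\|g_j\|\ge\epsilon$ and
\[
\log\epsilon \le \log\|g_0\| - \log 2\,\big(|\calS_k^{g\searrow}|-1\big) + |\calS_k^{decr}|\log\!\big(\tfrac{\kap{up}}{\epsilon}\big).
\]
The $-1$ accounts for the final successful iteration, whose successor iterate may fall below $\epsilon$ and terminate. Rearranging and using $-\log\epsilon=|\log\epsilon|$ gives \eqref{Skgboundexpr}.

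\textbf{Main obstacle.} The delicate point is the $\calS^{decr}$ increment. It relies on the acceptance test \eqref{testnextgrad} rather than on Lemma~\ref{gkplusonebound}, and on converting $\max(\varsigma,\|g_i\|)$ into a multiple of $\|g_i\|$ via $\|g_i\|\ge\epsilon$. This is exactly where the $\log(\kap{up}/\epsilon)$ factor comes from. One must also check that $\kap{up}/\epsilon\ge1$ is not needed for a sign argument. It is not, because the bound is used as an upper bound on the increment either way. The off-by-one handling of the terminal iteration is what produces the "$+1$".
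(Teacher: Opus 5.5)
Your proposal is correct and matches the paper's proof: the paper also bounds $\|g_{i+1}\|/\|g_i\|$ by $\tfrac12$ on $\calS_k^{g\searrow}$, bounds it by $\kap{up}/\epsilon$ on $\calS_k^{decr}$ (via the failure of \eqref{testnextgrad}, Lemma~\ref{skbound} and $\|g_i\|\ge\epsilon$), then telescopes over successful iterations and drops one index to get the $+1$. Your closing remark is wrong, though: when the dropped last successful iteration lies in $\calS_k^{decr}$, going from $|\calS_k^{decr}|-1$ to $|\calS_k^{decr}|$ factors does require $\kap{up}/\epsilon\ge 1$ (the paper uses this implicitly too), but it holds automatically because $\kap{upgrad}\ge 2/p!$ and $\mu^p\ge 2^p(p+1)!$ give $\kap{up}\ge 2^{p+1}(p+1)>1\ge\epsilon$.
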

\begin{proof}
First observe that if $k \in \calS_k^{g \searrow}$, $\|g_{k+1}\| \leq \frac{\|g_k\|}{2}$. Let $k \in \calS_k^{decr}$. Using now that \eqref{testnextgrad} does not hold, that $\epsilon \leq \|g_k\|$, $\max(\varsigma, \epsilon) \leq 1$, and that \eqref{exprskbound} applies, we derive that,
\begin{equation}\label{gkplusovergk}
	\frac{\|g_{k+1}\|}{\|g_k\|} \leq \kap{upgrad} \max(\frac{\varsigma}{\|g_k\|}, 1) \sigma_{k} \|s_k\|^{p} \leq \kap{upgrad} \max(\frac{\varsigma}{\epsilon}, 1) \sigma_{k} \|s_k\|^{p} \leq \frac{\kap{upgrad}  \mu^{p}}{\epsilon} = \frac{\kap{up}}{\epsilon},
\end{equation}
where $\kap{up}$ is defined in \eqref{kapupdef}.
Successively using that $\calS_k = \calS_k^{decr} \cap  \calS_k^{g \searrow}$, the bound on $\frac{\|g_{k+1}\|}{\|g_k\|}$ in both cases either $i \in \calS_k^{decr}$ \eqref{gkplusovergk} or $i \in \calS_k^{g \searrow}$, we obtain that
\begin{align*}
	\frac{\epsilon}{\|g_0\|} &\leq \frac{\|g_k\|}{\|g_0\|} = \prod_{i \in \calS_k \setminus \{k\}} \frac{\|g_{i+1}\|}{\|g_i\|} = \prod_{i \in \calS_k^{decr} \setminus \{k\} } \frac{\|g_{i+1}\|}{\|g_i\|} \prod_{i \in \calS_k^{g\searrow} \setminus \{k\}} \frac{\|g_{i+1}\|}{\|g_i\|} \\
	&\leq \left(\frac{\kap{up}}{\epsilon}\right)^{\calS_k^{decr} \setminus \{k\}} \left(\frac{1}{2}\right)^{ \calS_k^{g\searrow} \setminus \{k\}}.
\end{align*}
Rearranging the last inequality and using that $|\calS_k^{decr} \setminus \{k\}| \leq |\calS_k^{decr}|$ yields that
\[
\frac{2^{ \calS_k^{g\searrow} \setminus \{k\} } \epsilon}{\|g_0\|} \leq \left(\frac{\kap{up}}{\epsilon}\right)^{\calS_k^{decr}}.
\]
Taking the logarithm in the last inequality, using $|\calS_k^{g\searrow} \setminus \{k\}| \geq |\calS_k^{g\searrow}| - 1$ and further rearranging yields the stated result \eqref{Skgboundexpr}.
\end{proof}
Equipped with the two last results (Lemma~\ref{sigmakbound} and Lemma~\ref{Skgbound}) and Lemma~\ref{SvsU}, we are now ready to state the complexity of Algorithm~\ref{ARp-LS}.

\begin{theo}{complexs}
	Suppose that Assumption~\ref{assum1}--\ref{assum4} hold. Then the \texttt{ARp-LS} algorithm requires at most
	\[
	|\calS_k| \leq \left(1 + \frac{\log(\kap{up}) |\log(\epsilon)|}{\log(2)} \right) \kappa_\star\epsilon^{-(p+1)/p} + \frac{|\log(\epsilon)| + \log(\|g_0\|)}{\log(2)} + 1
	\]
successful iterations and at most 
\begin{align*}
\left(1 + \frac{|\log (\gamma_1)|}{\log (\gamma_2)}\right) &\left[ \left(1 + \frac{\log(\kap{up}) |\log(\epsilon)|}{\log(2)} \right) \kappa_\star \epsilon^{-(p+1)/p} + \frac{|\log(\epsilon)| + \log(\|g_0\|)}{\log(2)} + 1\right] \\ 
&+ \frac{1}{\log \gamma_2} \log\left(\frac{\sigma_{\max}}{\sigma_0}\right)
\end{align*}
iterations to produce a vector $x_\epsilon$ such that $\|\nabla_x^1 f(x_\epsilon)\| \leq \epsilon$, where $\kappa_\star$ is defined as
\begin{equation}\label{kapstardef}
\kappa_\star \eqdef  \frac{(p+1)!(2\kap{sup})^\sfrac{p+1}{p}\sigma_{\max}^\sfrac{1}{p} (f(x_0) - f_{\rm low})}{\eta_1 \varsigma },
\end{equation}
with $\sigma_{\max}$ as in \eqref{exprsigmakbound}, $\kap{up}$ as in \eqref{kapupdef} and $\kap{sup}$ as defined  in the initialization of \texttt{ARp-LS}.
\end{theo}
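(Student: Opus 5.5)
The plan is to bound $|\calS_k^{decr}|$ by $\kappa_\star\epsilon^{-(p+1)/p}$ through a sufficient-decrease argument. Lemma~\ref{Skgbound} then controls $|\calS_k^{g\searrow}|$, the decomposition \eqref{Skdivision} gives $|\calS_k|$, and Lemma~\ref{SvsU} together with Lemma~\ref{sigmakbound} turns this into the bound on total iterations. Throughout, $k$ is any iteration before termination, so $\|g_i\|\geq\epsilon$ for all $i\leq k$.

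\textbf{Step 1: decrease on $\calS_k^{decr}$.} Take $i\in\calS_k^{decr}$. The iteration is successful, so $\rho_i\geq\eta_1$, and Lemma~\ref{lowdecr} gives
\[
f(x_i)-f(x_{i+1})\geq \eta_1\frac{\sigma_i\max(\varsigma,\|g_i\|)\|s_i\|^{p+1}}{(p+1)!}.
\]
Since $i\in\calI^{decr}$, we have $\kap{sup}\sigma_i\|s_i\|^p\geq\min(\frac12,\frac{\|g_i\|}{2\varsigma})=\frac{\|g_i\|}{2\max(\varsigma,\|g_i\|)}$. This yields a lower bound
\[
\|s_i\|\geq\left(\frac{\|g_i\|}{2\kap{sup}\sigma_i\max(\varsigma,\|g_i\|)}\right)^{1/p}.
\]
Substituting, and writing $M_i=\max(\varsigma,\|g_i\|)$, the decrease is at least
\[
\frac{\eta_1}{(p+1)!}\,\sigma_i^{-1/p}(2\kap{sup})^{-(p+1)/p}\,\|g_i\|^{(p+1)/p}M_i^{-1/p}.
\]
\textbf{Step 2: the power of $M_i$.} The factor $\|g_i\|^{(p+1)/p}M_i^{-1/p}$ must be bounded below by something like $\varsigma\,\epsilon^{(p+1)/p}$. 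This is the only delicate point. If $\|g_i\|\leq\varsigma$, then $M_i=\varsigma$ and the factor is at least $\varsigma^{-1/p}\epsilon^{(p+1)/p}\geq\varsigma\,\epsilon^{(p+1)/p}$, since $\varsigma\leq1$. If $\|g_i\|>\varsigma$, the factor equals $\|g_i\|\geq\epsilon\geq\epsilon^{(p+1)/p}\geq\varsigma\,\epsilon^{(p+1)/p}$, since $\epsilon\leq1$.

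Using $\sigma_i\leq\sigma_{\max}$ from Lemma~\ref{sigmakbound}, each $i\in\calS_k^{decr}$ decreases $f$ by at least $\frac{\eta_1\varsigma}{(p+1)!(2\kap{sup})^{(p+1)/p}\sigma_{\max}^{1/p}}\,\epsilon^{(p+1)/p}$. The constant may absorb a slightly different power of $\varsigma$, and I would adjust it to match \eqref{kapstardef}.

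\textbf{Step 3: summing.} Every successful iteration is monotone, since $\rho_i\geq\eta_1>0$ and the denominator is positive. Summing over $\calS_k^{decr}$ and telescoping against $f(x_0)-f_{\rm low}$ (Assumption~\ref{assum2}) gives $|\calS_k^{decr}|\leq\kappa_\star\epsilon^{-(p+1)/p}$. Plugging this into \eqref{Skgboundexpr} and adding $|\calS_k^{decr}|$ gives
\[
|\calS_k|\leq\Bigl(1+\tfrac{\log(\kap{up}/\epsilon)}{\log 2}\Bigr)\kappa_\star\epsilon^{-(p+1)/p}+\dots
\]
I would then write $\log(\kap{up}/\epsilon)=\log\kap{up}+|\log\epsilon|$ and absorb it into the stated form $\log(\kap{up})|\log\epsilon|$, assuming $\kap{up}$ is large enough. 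The statement as written is slightly loose on this point, so I would enlarge constants if necessary.

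\textbf{Step 4: total iterations.} Lemma~\ref{SvsU} with $\sigma_{\max}$ from \eqref{exprsigmakbound} then gives the total iteration bound directly.

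The main obstacle is Step~2: handling the $\max(\varsigma,\|g_i\|)$ factor uniformly across the two regimes and reconciling the resulting constants with the exact form of $\kappa_\star$.
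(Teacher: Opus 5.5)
Your proposal is correct and follows the paper's own proof: sufficient decrease on $\calS_k^{decr}$ telescoped against $f(x_0)-f_{\rm low}$, then Lemma~\ref{Skgbound}, the splitting \eqref{Skdivision} and Lemma~\ref{SvsU}. Your case split on $\max(\varsigma,\|g_i\|)$ already gives exactly the constant $\kappa_\star$, so no adjustment is needed (the paper reaches it more directly via $\max(\varsigma,\|g_i\|)\geq\varsigma$ and $\min(\frac12,\frac{\|g_i\|}{2\varsigma})\geq\frac{\epsilon}{2}$). On the final step, the paper also silently replaces $\log(\kap{up}/\epsilon)=\log\kap{up}+|\log\epsilon|$ by $\log(\kap{up})|\log\epsilon|$; taking $\kap{up}$ large does not justify this, since the product vanishes at $\epsilon=1$, but $\kap{up}\geq 2^{p+1}(p+1)$ forces $\log\kap{up}>2$, so the replacement is valid whenever $\epsilon\leq e^{-2}$, and otherwise the bound should be stated with $\log(\kap{up}/\epsilon)$.
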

\begin{proof}
Let us first start by deriving a bound $|\calS_k|$. We begin by bounding $|\calS_k^{decr}|$. Let $i \in \calS_k^{decr}$. Using \eqref{rhokdef}, \eqref{locdecr} and that the second part of \eqref{teststeplength} does not apply, we derive that
\begin{align}
	f(x_{i}) - f(x_{i+1}) &\geq \eta_1 (f(x_i) - T_{f,p}(x_i,s_i)) \geq \eta_1 \frac{\sigma_{i} \maxivarsigma \|s_i\|^{p+1}}{(p+1)!} \nonumber \\
	&\geq \eta_1\frac{ \varsigma \sigma_{i} 
	 \|s_i\|^{p+1}}{(p+1)!}
	 \geq  \eta_1\frac{\varsigma \minhalfgi^\sfrac{p+1}{p}}{(p+1)!\sigma_{i}^\sfrac{1}{p} \kap{sup}^\sfrac{p+1}{p}}
	 \label{toreuse} \\
	 &\geq \frac{\eta_1 \varsigma \epsilon^\sfrac{p+1}{p}}{(p+1)!(2\kap{sup})^\sfrac{p+1}{p}\sigma_{\max}^\sfrac{1}{p}} \nonumber
\end{align} 
where we used \eqref{exprsigmakbound}, that $\varsigma , \epsilon \leq 1$ and that $\|g_i\| \geq \epsilon$ before termination. Summing the above inequality for all $i \in \calS_k^{decr}$, using that $f(x_i)$ is a non-increasing sequence and Assumption~\ref{assum2}, we derive,
\[
f(x_0) - f_{\rm low} \geq \sum_{i=0}^{k} f(x_i) - f(x_{i+1}) \geq \sum_{i \in \calS_k^{decr}} f(x_i) - f(x_{i+1}) \geq  \frac{|\calS_k^{decr}|\eta_1 \varsigma \epsilon^\sfrac{p+1}{p}}{(p+1)!(2\kap{sup})^\sfrac{p+1}{p}\sigma_{\max}^\sfrac{1}{p}}.
\]
Using now the  definition of $\kappa_\star$ in \eqref{kapstardef}, rearranging the last inequality gives $|\calS_k^{decr}| \leq \kappa_\star \epsilon^{-(p+1)/p}$. Finally, since $|\calS_k| = |\calS_k^{decr}| + |\calS_k^{g\searrow}|$ and the bound \eqref{Skgboundexpr} holds, we get the first part of Theorem~\ref{complexs}. Applying the result of Lemma~\ref{SvsU} yields the second part of the Theorem. 
\end{proof}

By this Theorem, we have retrieved the standard complexity in $\mathcal{O}\left(\epsilon^{-\sfrac{p+1}{p}}\right)$ (up to a logarithmic term) of standard adaptive regularization  that utilizes first up to $p$-th order derivative  to reach an $\epsilon$ first-order stationary point, see \cite{BirgGardMartSantToin17,Cartis2022-wb}.
The main contribution of the present analysis is that these complexity guarantees are obtained under substantially weaker smoothness assumptions that holds only locally and scales with the magnitude of the gradient, see \eqref{Lippth}.
For $p=1$, we retrieve the known complexity result (again, up to logarithmic term) under the $(L_0,L_1)$ smoothness that
have already been derived in \cite{Vankovetal25,Zhang2020Why} for nonconvex optimization. 
We  now specialize our results for the case of interest when $p=2$.
\begin{corr}{}
Suppose that Assumption~\ref{assum1}--\ref{assum4} hold and let $p=2$. Then the \texttt{AR$2$-LS} algorithm requires at most

\begin{align*}
	\left(1 + \frac{|\log (\gamma_1)|}{\log (\gamma_2)}\right) &\left[ \left(1 + \frac{\log(\kap{up}) |\log(\epsilon)|}{\log(2)} \right) \kappa_{\star,2}\epsilon^{-(p+1)/p} + \frac{|\log(\epsilon)| + \log(\|g_0\|)}{\log(2)} + 1\right] \\ 
	&+ \frac{1}{\log \gamma_2} \log\left(\frac{\sigma_{\max}}{\sigma_0}\right)
\end{align*}
iterations to produce a vector $x_\epsilon$ such that $\|\nabla_x^1 f(x_\epsilon)\| \leq \epsilon$, where $\kappa_{\star,2}$ is defined as
\begin{equation}\label{kapstardef2}
	\kappa_{\star,2} \eqdef  \frac{12\sqrt{2}\kap{sup}^\sfrac{3}{2}\sigma_{\max}^\sfrac{1}{2} (f(x_0) - f_{\rm low})}{\eta_1 \varsigma },
\end{equation}
with $\sigma_{\max}$ as in \eqref{exprsigmakbound}, $\kap{up}$ as in \eqref{kapupdef} and $\kap{sup}$ as defined  in the initialization of \texttt{ARp-LS}.
\end{corr}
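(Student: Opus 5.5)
The corollary is a direct specialization of Theorem~\ref{complexs} to $p=2$, so I will instantiate the bounds of that theorem rather than redo the analysis. The iteration bound in the corollary has exactly the same structure as the second bound of Theorem~\ref{complexs}, with $\kappa_\star$ replaced by $\kappa_{\star,2}$ and $\epsilon^{-(p+1)/p}$ left as written, which for $p=2$ is $\epsilon^{-3/2}$. The only work is therefore to check that the constant $\kappa_\star$ in \eqref{kapstardef} reduces to $\kappa_{\star,2}$ in \eqref{kapstardef2} when $p=2$.

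For this computation, set $p=2$ in \eqref{kapstardef}. Then $(p+1)! = 3! = 6$ and $\frac{p+1}{p} = \frac{3}{2}$. Hence
\[
(2\kap{sup})^{3/2} = 2\sqrt{2}\,\kap{sup}^{3/2} \quad\text{and}\quad \sigma_{\max}^{1/p} = \sigma_{\max}^{1/2}.
\]
Multiplying gives $6 \cdot 2\sqrt{2} = 12\sqrt{2}$, so
\[
\kappa_\star = \frac{12\sqrt{2}\,\kap{sup}^{3/2}\sigma_{\max}^{1/2}(f(x_0)-f_{\rm low})}{\eta_1\varsigma} = \kappa_{\star,2}.
\]
Here $\kap{sup} = (\kappa_\theta+\theta_1)/2$ is understood exactly as defined in the initialization of \texttt{ARp-LS}. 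The quantities $\sigma_{\max}$ and $\kap{up}$ are simply \eqref{exprsigmakbound} and \eqref{kapupdef} evaluated at $p=2$. In those, $\mu$ from \eqref{mudef} contains only the term $i=2$, which reads $\sigma_{\min}^{-1/2}\bigl(3(\frac{M_{0,2}}{\varsigma}+M_{1,2})\bigr)$; this could be written out explicitly if desired.

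Finally, I will note that the hypotheses match. Assumptions~\ref{assum1}--\ref{assum4} with $p=2$ are exactly what Theorem~\ref{complexs} requires, and Lemma~\ref{SvsU} applies unchanged. The conclusion therefore follows by substitution into the second bound of Theorem~\ref{complexs}.

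There is no real obstacle here; the only care needed is the arithmetic simplification of $(p+1)!(2\kap{sup})^{(p+1)/p}$ at $p=2$.
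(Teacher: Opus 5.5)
Your proposal is correct and takes the same approach as the paper, which states this corollary as a direct specialization of Theorem~\ref{complexs} to $p=2$ without a separate proof. The only substantive step is the arithmetic $(p+1)!\,(2\kap{sup})^{(p+1)/p}\sigma_{\max}^{1/p} = 6\cdot 2\sqrt{2}\,\kap{sup}^{3/2}\sigma_{\max}^{1/2}$, which you carry out correctly to obtain $\kappa_\star=\kappa_{\star,2}$.
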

When applied to $p=2$, we retrieve the results developed in \cite{GraJerToin25} where a method combining regularized Newton method and negative curvature has been proposed for function that verify the local  Lipschitz smoothness stated in Assumption~\ref{assum3} for $p=2$. In contrast, the algorithm is simpler and relies solely on  the properties of the cubic step to ensure convergence. We turn to the convex case.   
\subsection{Convex case}\label{convexanalysis}
We know begin by adding the additional assumption on the convexity function and stating some standard notation related to convex analysis, see \cite{Mishchenko2023,CartisGoulToint2012}.
\begin{assumption}\label{assum5}
	$f$ is a convex function.
\end{assumption}   
We also assume that the level sets are bounded, namely,
\begin{assumption}\label{assum6}
	\[
	\|x-x_\star\| \leq D \text{ for all $x$ such that } f(x) \leq f(x_0).
	\]
\end{assumption}
Let $f_\star$ be the (global) minimum of $f$ and 
\begin{equation}\label{Deltakdef}
\Delta_k = f(x_k) - f_\star.
\end{equation} 
In the following, we will derive a bound on the number of iterations to reach an iterate $x_\epsilon$ such that $\Delta_k \leq \epsilon$.  We now provide a Lemma that bounds $\Delta_k$ w.r.t $\|g_k\|$.
\begin{lem}{}
Suppose that Assumption~\ref{assum5} and \ref{assum6} hold. Then,
\begin{equation}\label{Deltakgk}
	\Delta_k \leq  D \|g_k\|.
\end{equation}
\end{lem}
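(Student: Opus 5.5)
The bound follows from the first-order characterization of convexity (Assumption~\ref{assum5}) combined with the bounded level set (Assumption~\ref{assum6}). Since $f$ is convex and continuously differentiable (Assumption~\ref{assum1} gives at least $p \geq 1$ continuous derivatives), the gradient inequality at $x_k$, evaluated at a global minimizer $x_\star$, reads
\[
f_\star = f(x_\star) \geq f(x_k) + g_k^\intercal (x_\star - x_k).
\]
Rearranging, and using the definition \eqref{Deltakdef}, gives
\[
\Delta_k = f(x_k) - f_\star \leq g_k^\intercal (x_k - x_\star).
\]
By the Cauchy--Schwarz inequality, the right-hand side is at most $\|g_k\| \, \|x_k - x_\star\|$.

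It remains to show that $\|x_k - x_\star\| \leq D$, which requires $x_k$ to lie in the level set $\{x \mid f(x) \leq f(x_0)\}$. For this I will use the monotonicity of the iterates of \texttt{ARp-LS}. On unsuccessful iterations $x_{k+1} = x_k$. On successful iterations, the ratio test passes with $\rho_k \geq \eta_1 > 0$, and Lemma~\ref{lowdecr} shows that the denominator $f(x_k) - T_{f,p}(x_k,s_k)$ is nonnegative (indeed positive, since $m_k(s_k) < m_k(0)$). Together these give $f(x_{k+1}) \leq f(x_k)$. By induction, $f(x_k) \leq f(x_0)$ for all $k$, so Assumption~\ref{assum6} gives $\|x_k - x_\star\| \leq D$.

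Combining the two steps yields \eqref{Deltakgk}. No step is a real obstacle. The only point needing care is the implicit requirement that a minimizer $x_\star$ exists. This is implicit in Assumption~\ref{assum6}, and is in any case guaranteed by continuity of $f$ on the compact, nonempty level set.
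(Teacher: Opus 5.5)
Your proposal is correct and follows the paper's proof: the convexity gradient inequality at $x_k$ evaluated at $x_\star$, then Cauchy--Schwarz, then Assumption~\ref{assum6}. Your explicit check that $f(x_k) \leq f(x_0)$ is sound and fills in a step the paper leaves implicit; it uses $\rho_k \geq \eta_1$ together with the strictly positive denominator implied by $m_k(s_k) < m_k(0)$ on successful iterations.
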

\begin{proof}
Since $f(x_\star) \geq f(x_k) + \langle g_k, x_\star - x_k \rangle$ follows directly from convexity, rearranging the latter, using Cauchy-Schwartz and  Assumption~\ref{assum6} yields the desired result.
\end{proof}
We will keep all the division of iterations introduced in \eqref{calIipdown} and \eqref{calIdecr}.
Observe that all the previous lemmas proved in Subsection~\ref{nonconvexanalysis} remain  valid up to Lemma~\ref{Skgbound} since the latter use the criteria $\|g_k\| \geq \epsilon$. We now prove a new version that takes into account $\Delta_k \geq \epsilon$ before termination.

\begin{lem}{Skgboundconvex}
	Suppose that Assumption~\ref{assum1}, Assumption~\ref{assum3} and Assumption~\ref{assum4}--Assumption~\ref{assum6} hold. Then, we have that 
	\begin{equation}\label{Skgboundexprconvex}
		|\calS_k^{g\searrow}| \leq \log(\frac{D\kap{up}}{\epsilon}) \frac{|\calS_k^{decr}|}{\log(2)} + \frac{|\log(\epsilon)| +  \log(D) + \log(\|g_0\|)}{\log(2)} + 1,
	\end{equation} 
	with $\kap{up}$  defined in \eqref{kapupdef},
 $\kap{upgrad}$ defined at the initialization of \texttt{ARp-LS} and $\mu$ as in \eqref{mudef}.
	\end{lem}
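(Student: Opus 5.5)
The proof will copy the product argument of Lemma~\ref{Skgbound}. The only change is that the termination test is now $\Delta_k > \epsilon$ instead of $\|g_k\| \geq \epsilon$, and \eqref{Deltakgk} converts the former into a gradient lower bound. As long as the algorithm has not terminated, every iteration $i \le k$ satisfies $\epsilon < \Delta_i \le D\|g_i\|$, so $\|g_i\| \geq \epsilon/D$. This bound replaces $\|g_i\| \geq \epsilon$ everywhere in the earlier argument. Assumption~\ref{assum6} applies at every iterate because the iterates are monotone in $f$: successful steps satisfy $\rho_i \geq \eta_1>0$ and unsuccessful ones leave $x_i$ unchanged, so $f(x_i) \le f(x_0)$.

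\textbf{Step 1 (per-iteration ratios).} For $i \in \calS_k^{g\searrow}$ we have $\|g_{i+1}\| \le \|g_i\|/2$ by definition \eqref{calIipdown}. For $i \in \calS_k^{decr}$, test \eqref{testnextgrad} failed. Together with Lemma~\ref{skbound}, this gives
\[
\frac{\|g_{i+1}\|}{\|g_i\|} \le \kap{upgrad}\max\Bigl(\frac{\varsigma}{\|g_i\|},1\Bigr)\sigma_i\|s_i\|^p \le \kap{upgrad}\mu^p \max\Bigl(\frac{D\varsigma}{\epsilon},1\Bigr).
\]
To bound the maximum, we need $D \geq \epsilon$. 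This is where some care is needed: if $D<\epsilon$, then $\max(D\varsigma/\epsilon,1)$ may equal $1$, whereas the target bound would give the smaller ratio $D\kap{up}/\epsilon$. I would handle this either by assuming w.l.o.g. that $D \ge 1 \ge \epsilon$ (enlarging $D$ keeps Assumption~\ref{assum6} valid), or by noting that $\varsigma\le 1$ and using $\max(D\varsigma/\epsilon,1)\le D/\epsilon$ when $D\ge\epsilon$. Either way, we obtain $\|g_{i+1}\|/\|g_i\| \le D\kap{up}/\epsilon$. I expect this small bookkeeping point to be the only real obstacle.

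\textbf{Step 2 (telescoping).} Use the decomposition \eqref{Skdivision} and the fact that $g$ changes only on successful iterations. This gives
\[
\frac{\epsilon}{D\|g_0\|} \le \frac{\|g_k\|}{\|g_0\|} = \prod_{i\in\calS_k\setminus\{k\}}\frac{\|g_{i+1}\|}{\|g_i\|} \le \Bigl(\frac{D\kap{up}}{\epsilon}\Bigr)^{|\calS_k^{decr}|}\Bigl(\frac12\Bigr)^{|\calS_k^{g\searrow}|-1}.
\]
Taking logarithms then yields
\[
(|\calS_k^{g\searrow}|-1)\log 2 \le |\calS_k^{decr}|\log\Bigl(\frac{D\kap{up}}{\epsilon}\Bigr) + \log\|g_0\| + \log D - \log\epsilon.
\]
Since $\epsilon \le 1$, we have $-\log\epsilon = |\log\epsilon|$. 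Dividing by $\log 2$ gives exactly \eqref{Skgboundexprconvex}.
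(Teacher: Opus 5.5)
Your argument is the paper's own proof: the ratio bound $\|g_{i+1}\|/\|g_i\|\le D\kap{up}/\epsilon$ on $\calS_k^{decr}$ comes from the failed test \eqref{testnextgrad}, Lemma~\ref{skbound} and $\|g_i\|\ge\epsilon/D$, and is followed by the same telescoping over $\calS_k^{g\searrow}\cup\calS_k^{decr}$. The condition $D\ge\epsilon$ you flag is genuinely needed and is used silently by the paper, since $\max(\varsigma,\epsilon)\le 1$ alone does not give $\max(\varsigma D/\epsilon,1)\le D/\epsilon$; however, enlarging $D$ is not quite w.l.o.g., because the right-hand side of \eqref{Skgboundexprconvex} increases with $D$, so that route only proves the bound for the enlarged $D$.
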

	\begin{proof}
As the proof is similar to Lemma~\ref{Skgbound} and differs only by utilizing that $\Delta_k \geq \epsilon$, it is deferred to the Appendix~\ref{thirdmsproof}.
	\end{proof}
Equipped with the last lemmas, we are now in a position to state our new complexity theorem. 

\begin{theo}{complexsconvex}
	Suppose that Assumption~\ref{assum1}--\ref{assum6} hold. Then the \texttt{ARp-LS} algorithm requires at most
	\[
	|\calS_k| \leq \left(1 + \frac{\log(D\kap{up}) |\log(\epsilon)|}{\log(2)} \right) \kappa_{\star,convex}\epsilon^{-1/p}  + \frac{|\log(\epsilon)| + \log(D) + \log(\|g_0\|)}{\log(2)} + 1
	\]
	successful iterations and at most 
	\begin{align*}
		\left(1 + \frac{|\log (\gamma_1)|}{\log (\gamma_2)}\right) &\left[ \left(1 + \frac{\log({D\kap{up}}) |\log(\epsilon)|}{\log(2)} \right) \kappa_{\star,convex} \epsilon^{-1/p} + \frac{|\log(\epsilon)| + \log(D) + \log(\|g_0\|)}{\log(2)} + 1\right] \\ 
		&+ \frac{1}{\log \gamma_2} \log\left(\frac{\sigma_{\max}}{\sigma_0}\right)
	\end{align*}
	iterations to produce a vector $x_\epsilon$ such that $ f(x_k) - f_\star \leq \epsilon$, where $\kappa_{\star,convex}$ is defined as
	\begin{equation}\label{kapstarconvexdef}
		\kappa_{\star,convex} \eqdef  \frac{(p+1)! (f(x_0) - f_\star) (2\kap{sup})^\sfrac{p+1}{p} \sigma_{\max}^\sfrac{1}{p} + p\eta_1 \varsigma (f(x_0) - f_\star)^\sfrac{1}{p}+\eta_1}{\eta_1 \varsigma},
	\end{equation}
	with $\sigma_{\max}$ as in \eqref{exprsigmakbound}, $\kap{up}$ as in \eqref{kapupdef} and $\kap{sup}$ as defined  in the initialization of \texttt{ARp-LS}.
\end{theo}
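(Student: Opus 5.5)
The count of successful iterations splits as in the nonconvex proof: by \eqref{Skdivision}, $|\calS_k| = |\calS_k^{decr}| + |\calS_k^{g\searrow}|$. Lemma~\ref{Skgboundconvex} already bounds $|\calS_k^{g\searrow}|$ in terms of $|\calS_k^{decr}|$. It therefore suffices to show $|\calS_k^{decr}| \leq \kappa_{\star,convex}\,\epsilon^{-1/p}$; the total iteration count then follows from Lemma~\ref{SvsU} together with the bound on $\sigma_{\max}$ of Lemma~\ref{sigmakbound}, which does not use convexity.

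To bound $|\calS_k^{decr}|$ I will replace the crude per-iteration decrease of Theorem~\ref{complexs} by a recursion on $\Delta_i$. For $i \in \calS_k^{decr}$, inequality \eqref{toreuse} gives
\[
\Delta_i - \Delta_{i+1} \;\geq\; c\,\min\!\left(\tfrac12, \tfrac{\|g_i\|}{2\varsigma}\right)^{(p+1)/p},
\qquad
c = \frac{\eta_1\varsigma}{(p+1)!\,\kap{sup}^{(p+1)/p}\sigma_{\max}^{1/p}} .
\]
I then split into two regimes.

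\emph{Large gradient}, $\|g_i\| \geq \varsigma$: the minimum equals $\tfrac12$ and the decrease is at least the constant $c\,2^{-(p+1)/p}$. Since $\Delta$ is nonincreasing and bounded by $\Delta_0 = f(x_0)-f_\star$, the number of such iterations is at most $\Delta_0 (2\kap{sup})^{(p+1)/p}\sigma_{\max}^{1/p}(p+1)!/(\eta_1\varsigma)$. This is the first term of $\kappa_{\star,convex}$, and it is harmless because $\epsilon^{-1/p}\geq 1$.

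\emph{Small gradient}, $\|g_i\| < \varsigma$: I use \eqref{Deltakgk}, $\|g_i\| \geq \Delta_i/D$, to get
\[
\Delta_i - \Delta_{i+1} \;\geq\; c'\,\Delta_i^{(p+1)/p}.
\]
I then apply the standard trick on $\Delta_i^{-1/p}$. By the mean value theorem and $\Delta_{i+1}\leq\Delta_i$,
\[
\Delta_{i+1}^{-1/p} - \Delta_i^{-1/p} \;\geq\; \frac{1}{p}\,\frac{\Delta_i-\Delta_{i+1}}{\Delta_i^{(p+1)/p}} \;\geq\; \frac{c'}{p}.
\]
Unsuccessful iterations leave $\Delta$ unchanged, and the remaining successful iterations only decrease $\Delta$, so they only increase $\Delta^{-1/p}$. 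Summing therefore gives
\[
\epsilon^{-1/p} \;\geq\; \Delta_k^{-1/p} \;\geq\; \Delta_0^{-1/p} + \frac{c'}{p}\,\#\{\text{small-gradient decr iterations}\}.
\]
This bounds that count by $(p/c')\,\epsilon^{-1/p}$. The remaining pieces of $\kappa_{\star,convex}$, namely the $p\eta_1\varsigma\Delta_0^{1/p}$ and $\eta_1$ terms, presumably come from bookkeeping: the $\Delta_0^{-1/p}$ offset and a $+1$ for a boundary index.

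The main obstacle is making the constants match \eqref{kapstarconvexdef}. The constant $c'$ carries a factor of $D$ and $\varsigma$ from $\|g_i\|/(2\varsigma)\geq \Delta_i/(2\varsigma D)$, and this must be absorbed somewhere, possibly through the $\log(D\kap{up})$ factor or the constant grouping. If it does not match exactly, I will first aim for a bound of the stated form $\mathcal{O}(\epsilon^{-1/p})$ with some explicit constant, and then check the algebra against $\kappa_{\star,convex}$. Finally, I combine this with Lemma~\ref{Skgboundconvex} and Lemma~\ref{SvsU} as in Theorem~\ref{complexs}.
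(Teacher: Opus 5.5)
Your plan essentially follows the paper's proof. It uses the same split of $\calS_k^{decr}$ according to whether $\|g_i\|\geq\varsigma$ or $\|g_i\|<\varsigma$, and the constant decrease from \eqref{toreuse} in the first case, which correctly gives the first term of $\kappa_{\star,convex}$. In the second case it uses the recursion $\Delta_i-\Delta_{i+1}\geq c'\Delta_i^{\sfrac{p+1}{p}}$ obtained from \eqref{Deltakgk}, and it finishes with Lemma~\ref{Skgboundconvex} and Lemma~\ref{SvsU}. Your mean-value/telescoping argument on $\Delta^{-1/p}$ is correct. It bounds the number of small-gradient iterations in $\calS_k^{decr}$ by
\[
\frac{p}{c'}\,\epsilon^{-1/p}+1,
\qquad
\frac{p}{c'}=\frac{p\,(p+1)!\,(2D\kap{sup})^{\sfrac{p+1}{p}}(\varsigma\sigma_{\max})^{\sfrac{1}{p}}}{\eta_1},
\]
where your $c'$ is exactly the paper's $\kappa_C$.

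The gap is your last paragraph: this bound cannot be turned into the printed $\kappa_{\star,convex}$. The factor $D^{\sfrac{p+1}{p}}\sigma_{\max}^{\sfrac{1}{p}}$ in front of $\epsilon^{-1/p}$ is polynomial in $D$, so it cannot be hidden in the logarithmic factor $\log(D\kap{up})$. The $\Delta_0^{-1/p}$ offset enters your inequality with a favourable sign, so it can only be discarded. It cannot generate the term $p\eta_1\varsigma(f(x_0)-f_\star)^{1/p}$.

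The mismatch is not your mistake. The paper obtains its $D$- and $\sigma_{\max}$-free count $p(f(x_0)-f_\star)^{1/p}\epsilon^{-1/p}+1$ from \eqref{nutbound}, where it writes $\bigl(\nu_0^{-1/p}+t/p\bigr)^p=(t+p)^p/(\nu_0p^p)$. The left-hand side actually equals $(p+t\nu_0^{1/p})^p/(\nu_0p^p)$, so the identity holds only when $\nu_0=1$. Since $\nu_1\geq0$ forces $\nu_0\leq1$, the paper's inequality is stronger than what Nesterov's argument gives. Undoing the scaling $\nu_t=\kappa_C^p\alpha_t$, the correct conclusion is $\alpha_t^{-1/p}\geq\alpha_0^{-1/p}+\kappa_C t/p$, which is precisely your estimate. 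What your argument (and the paper's, once repaired) proves is therefore the theorem with
\[
\kappa_{\star,convex}=\frac{(p+1)!\,(2\kap{sup})^{\sfrac{p+1}{p}}\sigma_{\max}^{\sfrac{1}{p}}\bigl(f(x_0)-f_\star+p\,(\varsigma D)^{\sfrac{p+1}{p}}\bigr)+\eta_1}{\eta_1\varsigma}.
\]
This has the usual $(\sigma_{\max}D^{p+1}/\epsilon)^{1/p}$ shape of non-accelerated tensor methods. State and prove that version rather than trying to match the printed constant.

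A smaller point of the same nature: Lemma~\ref{Skgboundconvex} produces the factor $\log(D\kap{up}/\epsilon)=\log(D\kap{up})+|\log(\epsilon)|$, not $\log(D\kap{up})\,|\log(\epsilon)|$. Combining ``as in Theorem~\ref{complexs}'' therefore gives $1+\bigl(\log(D\kap{up})+|\log(\epsilon)|\bigr)/\log(2)$ in front of $\kappa_{\star,convex}\epsilon^{-1/p}$. The order $\mathcal{O}(|\log(\epsilon)|\epsilon^{-1/p})$ is unaffected.
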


\begin{proof}
Let us introduce a further subdivision in $\calS_k^{decr}$. Denote by 
\begin{equation}\label{largesmallgrad}
\calS_k^{decr,L}  \eqdef \left\{ i \in \calS_k^{decr} \Big| \|g_i\| \geq \varsigma \right\} \quad \calS_k^{decr,S} \eqdef \calS_k^{decr} \setminus \calS_k^{decr,L}. 
\end{equation}
Consider $i \in \calS_k^{decr,L}$. Using \eqref{toreuse} and that $\|g_i\| \geq \varsigma$ from \eqref{largesmallgrad} and \eqref{exprsigmakbound}, we obtain that
\[
f(x_0) - f_\star \geq \sum_{i=0}^k f(x_i) - f(x_{i+1}) \geq \sum_{i \in \calS_k^{decr,L}} \frac{\eta_1 \varsigma}{(p+1)!(2\kap{sup})^\sfrac{p+1}{p} \sigma_{i}^\sfrac{1}{p}}
\geq \frac{\eta_1 \varsigma |\calS_k^{decr,L}|}{(p+1)!(2\kap{sup})^\sfrac{p+1}{p} \sigma_{\max}^\sfrac{1}{p}},
\]
rearranging the last inequality gives that
\begin{equation}\label{SkdecrL}
|\calS_k^{decr,L}| \leq \frac{(p+1)! (f(x_0) - f_\star) (2\kap{sup})^\sfrac{p+1}{p} \sigma_{\max}^\sfrac{1}{p}}{\eta_1 \varsigma}.
\end{equation}
Now we focus on $|\calS_k^{decr,S}|$. Denote by $\calS_k^{decr,S} = \left\{j_t\right\}_{t=0}^{|\calS_k^{decr,S}|-1}$ where $j_t$ is an increasing sequence. We suppose that $|\calS_k^{decr,S}| \geq 2$. From \eqref{toreuse} and \eqref{largesmallgrad}, that the $f(x_i)$ sequence is non-increasing and that \eqref{Deltakgk} applies, we have that
\[
f(x_{j_t}) - f(x_{j_{t+1}}) \geq f(x_{j_t}) - f(x_{j_t+1}) \geq \frac{\eta_1 \varsigma \|g_{j_t}\|^\sfrac{p+1}{p}}{(p+1)!(2\varsigma\kap{sup})^\sfrac{p+1}{p}\sigma_{\max}^\sfrac{1}{p}} \geq \frac{\eta_1 (f(x_{j_t}) - f_\star)^\sfrac{p+1}{p}}{(p+1)!(2 D\kap{sup})^\sfrac{p+1}{p}(\varsigma\sigma_{\max})^\sfrac{1}{p}}.
\]
Defining $\alpha_t = f(x_{j_t}) - f_\star$ for $t \in \iibe{0}{|\calS_k^{decr,S}| -2}$, the last inequality can be rewritten as
\[
\alpha_t - \alpha_{t+1} \geq \frac{\eta_1 \alpha_t^\sfrac{p+1}{p}}{(p+1)!(2 D\kap{sup})^\sfrac{p+1}{p}(\varsigma\sigma_{\max})^\sfrac{1}{p}}.
\]
Denote $\kappa_{C} = \frac{\eta_1}{(p+1)!(2 D\kap{sup})^\sfrac{p+1}{p}(\varsigma\sigma_{\max})^\sfrac{1}{p}}$ and $\nu_t = \kappa_{C}^p \alpha_t$, we then obtain  that
\[
\nu_{t} - \nu_{t+1} \geq \nu_{t}^\sfrac{p+1}{p}.
\]
Proceeding exactly as in
 \cite[Theorem~4]{Nesterov2019}, we get that,
\begin{equation}\label{nutbound}
\frac{1}{\nu_t} \geq \left(\frac{1}{\nu_0^\sfrac{1}{p}} + \frac{t}{p}\right)^p = \frac{(t+p)^p}{\nu_0 p^p}.
\end{equation}
Upper-bounding $\alpha_t$ from the last inequality and that $\alpha_0 = \kappa_{C}^{-p} \nu_0$, we derive that
\begin{align*}
	\alpha_t = \kappa_C^{-p} \nu_t \leq \kappa_{C}^{-p} \frac{\nu_0 p^p}{(t+p)^p} = \frac{\alpha_0 p^p}{(t+p)^p}.
\end{align*}
Therefore, since before termination $\alpha_{|\calS_k^{decr,S}|-2} \geq \epsilon$ and $\alpha_0 \leq f(x_0) - f_\star$, we derive that
\begin{equation}\label{SkdecrS}
	|\calS_k^{decr,S}| \leq \frac{p (f(x_0) - f_\star)^\sfrac{1}{p} }{ \epsilon^\sfrac{1}{p}} + 1.
\end{equation}
Note that this bound still applies when $	|\calS_k^{decr,S}| \leq 1$.
Combining both \eqref{SkdecrL} and \eqref{SkdecrS}, we derive that
\begin{equation}\label{Skdecrbound}
|\calS_k^{decr}| = |\calS_k^{decr,L}| + |\calS_k^{decr,S}| \leq \frac{(p+1)! (f(x_0) - f_\star) (2\kap{sup})^\sfrac{p+1}{p} (\sigma_{\max})^\sfrac{1}{p} + p\eta_1 \varsigma (f(x_0) - f_\star)^\sfrac{1}{p}}{\eta_1 \varsigma \epsilon^{\sfrac{1}{p}}}  +1. 
\end{equation}
Using now the  definition of $\kappa_{\star,convex}$ in \eqref{kapstarconvexdef}, that $|\calS_k| = |\calS_k^{decr}| + |\calS_k^{g\searrow}|$ and the bound \eqref{Skgboundexprconvex} with the inequlity \eqref{Skdecrbound} yields the first part of Theorem~\ref{complexsconvex}. Using now the result of Lemma~\ref{SvsU} yields the second part of the Theorem.

\end{proof}

Theorem~3.2 therefore recovers the classical complexity bound for standard (non-accelerated) tensor methods applied to convex optimization problems~\cite{Nesterov2019}, while requiring substantially weaker smoothness assumptions.
 Under global Lipschitz continuity, sufficiently large values of the regularization parameter guarantee that the model $m_k$ is convex, making its minimization computationally tractable (see \cite{Nesterov2019}). That argument relies fundamentally on global smoothness of the $p$-th order tensor. In the present setting, establishing an analogous result would additionally require explicit knowledge of the locality parameter $\delta$, leading to a non-adaptive algorithm that would no longer be consistent with the philosophy of Algorithm~\ref{ARp-LS}.

\section{Numerical Illustration}\label{numeric-s}
In this section, we provide numerical illustration of our method for three different nonconvex regression tasks for the case $p=2$. The objective is to highlight the performance of \texttt{AR2-LS} on several nonconvex regression tasks and compare it to vanilla \texttt{AR2} (standard cubic regulaization $\frac{\sigma_{k}}{6}\|s\|^3$). As baselines, we will also consider other competitive fast regularized Newton method \texttt{AN2CLS} and \texttt{AN2C} developed in both \cite{GraJerToin24,GraJerToin25}.  Throughout
this section, $ \{a_i, y_i\}_{i=1}^m$ is the training data.   $a_i \in \mathbb{R}^n $ is the ith feature vector,
	$b_i$ denotes either the binary class label (
	$0$ or $1$) or the regression target, depending on the problem under consideration.

\noindent
For the first task, we consider the robust biweight Tucker regression problem \cite{Beaton1974}
\begin{equation}\label{beaton}
f(x) = \frac{1}{m} \sum_{i=1}^{m} \varphi(a_i^T x - b_i) \quad \text{ where } \varphi(\theta) = \frac{\theta^2}{1 + \theta^2}.
\end{equation} 

\noindent
The function $\varphi$ is a robust alternative of the quadratic loss; since it is considerably less sensitive to large values. To further increase the degree of nonconvexity, we follow the implementation detailed in \cite{pmlr-v70-carmon17a} for $a_i$ and $b_i$. The optimization process is initialized at $x_0=0$ and we consider $n=30$ and $m=30$. The results presented will be for 5000 independent instances of \eqref{beaton}
 
Next, we consider a nonconvex-binary classification problem as done in \cite{cartis2025efficient,GraJerToin24}
\begin{equation}\label{nonconvexbinreg}
	f(x) = \frac{1}{m} \sum_{i=1}^{m} \left( \frac{1}{1+\exp(-a_i^\intercal x)} - b_i \right)^2
\end{equation}
here $a_i \sim \mathcal{N}(0,4I_n)$ and $b_i$ is set to zero or one with equal probability and we initialize the optimization at $x_0 = 0$. We consider here $n=50$, $m=200$ and  5000 independently generated problem instances of \eqref{nonconvexbinreg}.

The final test problem is the phase retrieval problem \cite{candes2015phase}
 where the objective function is given by
\begin{equation}\label{phaseretrieval}
f(x) = \frac{1}{2m} \sum_{i=1}^{m} \left( (a_i^T x)^2 - b_i \right)^2.
\end{equation}
In this experiment, we set $n = m = 50$ with  $a_i \sim  \mathcal{N}(0, I_n)$,
a ground truth signal $x^\star \sim \mathcal{N}(0, 4I_n)$, then generate noisy observations 
$b = (Ax^*)^2 + 3\nu_1 + \nu_2$ where $\nu_1 \sim \mathcal{N}(0, I_m)$ is  and 
$\nu_2 \sim \{\mathcal{B}(0.3)\}_{i=1}^m$ as done in \cite{pmlr-v70-carmon17a}. The optimization process is initialized at 
$x_0   \sim \mathcal{N}(0, 4I_n)$, to have the same norm as the true solution $x^\star$. Note that for this example, Since the objective function \eqref{phaseretrieval} is quartic, the Hessian of \eqref{phaseretrieval} is not Lipschitz continuous and Assumption~\ref{assum3} is therefore more appropriate for this case.

\subsection{Results}\label{subsec-res}
We compare our method \texttt{AR2-LS} with an equivalent \texttt{AR2} \cite{Cartis2022-wb}.
To isolate the effect of the proposed regularization strategy~\eqref{model}, both algorithms employ the same set of algorithmic parameters whenever possible, and the cubic subproblem is solved exactly (i.e., $\theta_1=1$). The common hyperparameters are chosen as
\[
\sigma_{\min}=10^{-8}, \qquad
\gamma_1=0.5, \qquad
\gamma_2=10, \qquad
\gamma_3=10, \qquad
\eta_1=10^{-4}, \qquad
\eta_2=0.95, \qquad
\theta_1=1.
\]

For \texttt{AR2}, we initialize the regularization parameter with $\sigma_0=1$. For \texttt{AR2-LS}, we instead use the adaptive initialization
\[
\sigma_0=\frac{1}{\|g_0\|}, \qquad
\vartheta=\kappa_{\theta}=10^{5}.
\]

We also include \texttt{AN2C} and \texttt{AN2CLS} as competing baseline methods. These second-order algorithms alternate between regularized Newton steps and negative-curvature directions. The \texttt{AN2C} method was developed under the assumption of a globally Lipschitz continuous Hessian~\cite{GraJerToin24}, whereas \texttt{AN2CLS} was specifically designed for local Hessian Lipschitz continuity \cite{GraJerToin25} (namely, Assumption~\ref{assum3} with $p=2$). For both methods, we compute the minimum eigenvalue exactly and solve the regularized Newton sub-problem exactly. Their implementations follow the parameter choices reported in~\cite[Section~5]{GraJerToin25}.

All experiments were conducted in Julia on a machine equipped with an AMD Ryzen~7~5000 processor running at 3.8\,GHz. The optimization process is terminated once the gradient norm satisfies
\[
\|\nabla f(x_k)\|\leq \epsilon,
\]
with $\epsilon=10^{-6}$. As a performance metric, we report the number of iterations required for convergence.

To compare between the four proposed methods, we give plots that details the percentage of successfully solved instances (y-axis) for a budget of iteration (x-axis).

\begin{figure}[htbp]
	\centering

	\begin{subfigure}{0.45\textwidth}
		\centering
		\includegraphics[width=\linewidth]{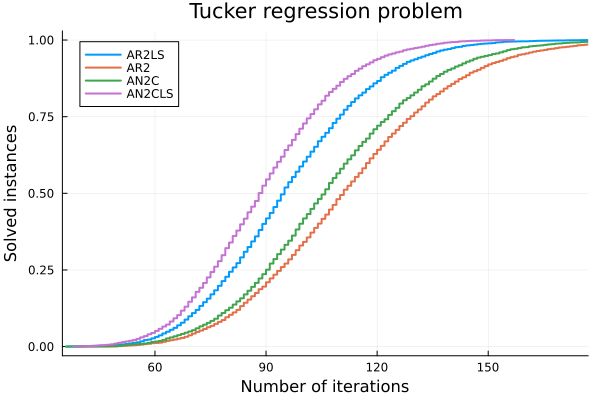} 
		\caption{Tucker regression problem as specified in \eqref{beaton}}
		\label{fig:sub1}
	\end{subfigure}
	\hfill  
	\begin{subfigure}{0.45\textwidth}
		\centering
		\includegraphics[width=\linewidth]{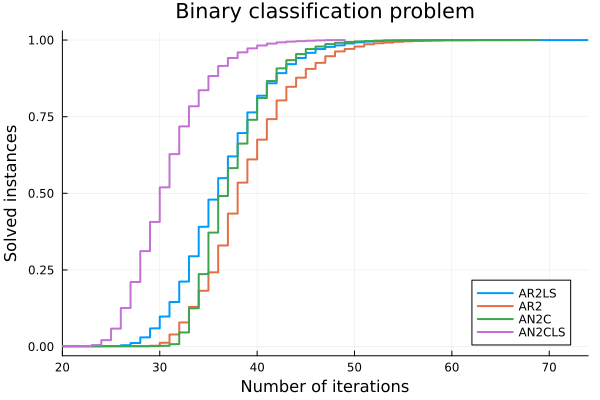} 
		\caption{Binary nonconvex classification as specified in \eqref{nonconvexbinreg}}
		\label{fig:sub2}
	\end{subfigure}
	
	\vspace{1em} % vertical space between rows
	
	% Second row: one subfigure centered
	\begin{subfigure}{0.5\textwidth}  % you can adjust width
		\centering
		\includegraphics[width=\linewidth]{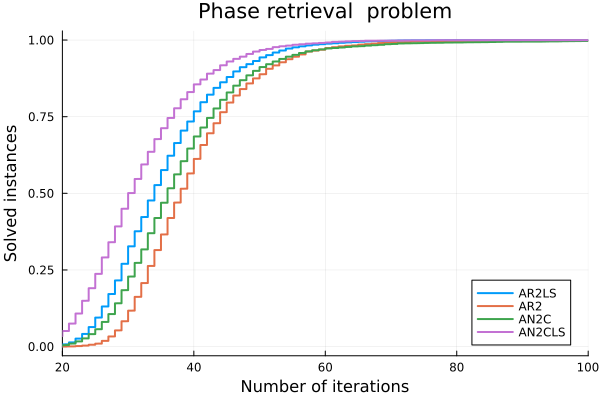} 
		\caption{Phase retrieval problem as specified in \eqref{phaseretrieval}}
		\label{fig:sub3}
	\end{subfigure}
	
	\caption{Comparison of performance between \texttt{AR2LS}, \texttt{AR2}, \texttt{AN2C} and \texttt{AN2CLS} for three different nonconvex regression tasks.}
	\label{fig:main}
\end{figure}  

Figure~\ref{fig:main} shows that \texttt{AR2-LS} consistently outperforms the classical \texttt{AR2} algorithm across all three benchmark problems. In particular, for the phase retrieval problem~\eqref{phaseretrieval}, whose objective function is quartic, methods designed to exploit local smoothness (\texttt{AR2-LS} and \texttt{AN2CLS}) are consistently more efficient than those developed under the standard global Lipschitz smoothness assumption (\texttt{AR2} and \texttt{AN2C}).
Overall, \texttt{AR2-LS} comes in second place, behaves in most case better than \texttt{AN2C} but trail behind \texttt{AN2CLS}. This may be explained by the fact that the regularization of the Newton system proportional to $\|g_k\|$ used by \texttt{AN2CLS} was shown to be optimal for a wide range of convex problem, see \cite{DoikovMischNes24,Doikov2025} and the references therein.

These numerical experiments are preliminary and primarily intended to illustrate the practical potential of the proposed \texttt{AR2-LS} method. A more comprehensive numerical study would require a careful tuning of the algorithmic parameters \footnote{For instance, one could consider an iteration-dependent adaptive choice of $\varsigma$ in Algorithm~\ref{ARp-LS}}. Additional nonlinear optimization benchmarks should also be included to better assess the practical performance and robustness of \texttt{AR2-LS}.

\section{Conclusion and Perspectives}\label{concl-s}
In the current paper, we have proposed an adaptive tensor method that handles functions that have locally Lipschitz smooth $p$-th order tensor. The new smoothness condition is assumed to hold locally around a ball for any given iterate and takes also into account the magnitude of the gradient. At variance with standard Lipschitz condition, this condition covers a boarder class of problems such as univariate polynomial and exponential. The algorithm resembles standard adaptive regularization methods \cite{BirgGardMartSantToin17,Nesterov2019} but differs by adjusting the expression of the regularization parameter. In addition to the usual decrease ratio, we also introduce additional tests to take into account the fact that the Lipschitz condition is local. Initial experimental results highlight some merits of this approach for some regression tasks.

In this line of work, a natural extension would be to introduce broader smoothness assumptions related to Assumption~\ref{assum3} as it was previously done for $(L_0, L_1)$ smoothness condition \cite{Chenetal23,ReiszeiJadb25,ZhangFang20}. Furthermore, the proposed regularization parameter formula \eqref{model} could be integrated into the recently developed efficient third-order adaptive tensor methods \cite{cartis2025efficient}, offering a promising avenue for future research.

\section*{Data Availability Statement}  
The synthetic data used in this study were generated entirely by random processes. 
No real-world or publicly available datasets were used.

\section*{Acknowledgments}
 This work was supported
by the Hong Kong Innovation and Technology Commission (InnoHK Project CIMDA). The author would like to thank Prof.\ Cartis for valuable advice that helped improve this work.

{ \footnotesize
	\bibliographystyle{plain}
	\bibliography{refs}

@BOOK{Cartis2022-wb,
	title     = {Evaluation Complexity of Algorithms for Nonconvex Optimization},
	author    = {Cartis, Coralia and Gould, Nicholas I. M. and Toint, Philippe L.},
	publisher = {Society for Industrial and Applied Mathematics},
	series    = {MOS-SIAM Series on Optimization},
	year      = 2022,
	address   = {Philadelphia, PA}
}

@inproceedings{Zhang2020Why,
	title     = {Why Gradient Clipping Accelerates Training: A Theoretical Justification for Adaptivity},
	author    = {Zhang, Jingzhao and He, Tianxing and Sra, Suvrit and Jadbabaie, Ali},
	booktitle = {International Conference on Learning Representations},
	year      = 2020
}

@article{BirgGardMartSantToin17,
	title   = {Worst-Case Evaluation Complexity for Unconstrained Nonlinear Optimization Using High-Order Regularized Models},
	author  = {Birgin, E. G. and Gardenghi, J. L. and Mart{\'i}nez, J. M. and Santos, S. A. and Toint, Ph. L.},
	journal = {Mathematical Programming},
	volume  = {163},
	number  = {1-2},
	pages   = {359--368},
	year    = 2017
}

@article{Nesterov2019,
	title   = {Implementable Tensor Methods in Unconstrained Convex Optimization},
	author  = {Nesterov, Yurii},
	journal = {Mathematical Programming},
	volume  = {186},
	number  = {1-2},
	pages   = {157--183},
	year    = 2019
}

@article{CartGoulToin20b,
	title   = {Sharp Worst-Case Evaluation Complexity Bounds for Arbitrary-Order Nonconvex Optimization with Inexpensive Constraints},
	author  = {Cartis, Coralia and Gould, Nicholas I. M. and Toint, Philippe L.},
	journal = {SIAM Journal on Optimization},
	volume  = {30},
	number  = {1},
	pages   = {513--541},
	year    = 2020
}

@inproceedings{Chenetal23,
	title     = {Generalized-Smooth Nonconvex Optimization Is as Efficient as Smooth Nonconvex Optimization},
	author    = {Chen, Ziyi and Zhou, Yi and Liang, Yingbin and Lu, Zhaosong},
	booktitle = {Proceedings of the 40th International Conference on Machine Learning},
	series    = {Proceedings of Machine Learning Research},
	articleno = {214},
	numpages  = {32},
	year      = 2023
}

@inproceedings{Vankovetal25,
	title     = {Optimizing $(L_0, L_1)$-Smooth Functions by Gradient Methods},
	author    = {Vankov, Daniil and Rodomanov, Anton and Nedich, Angelia and Sankar, Lalitha and Stich, Sebastian U.},
	booktitle = {The Thirteenth International Conference on Learning Representations},
	year      = 2025
}

@inproceedings{Koloskova23,
	title     = {Revisiting Gradient Clipping: Stochastic Bias and Tight Convergence Guarantees},
	author    = {Koloskova, Anastasia and Hendrikx, Hadrien and Stich, Sebastian U.},
	booktitle = {Proceedings of the 40th International Conference on Machine Learning},
	series    = {Proceedings of Machine Learning Research},
	articleno = {714},
	numpages  = {21},
	year      = 2023
}

@inproceedings{Leietal23,
	title     = {Convex and Non-Convex Optimization under Generalized Smoothness},
	author    = {Li, Haochuan and Qian, Jian and Tian, Yi and Rakhlin, Alexander and Jadbabaie, Ali},
	booktitle = {Advances in Neural Information Processing Systems},
	volume    = {36},
	articleno = {1749},
	numpages  = {34},
	year      = 2023
}

@InProceedings{hubler24a,
	title     = {Parameter-Agnostic Optimization under Relaxed Smoothness},
	author    = {H{\"u}bler, Florian and Yang, Junchi and Li, Xiang and He, Niao},
	booktitle = {Proceedings of The 27th International Conference on Artificial Intelligence and Statistics},
	series    = {Proceedings of Machine Learning Research},
	pages     = {4861--4869},
	year      = 2024
}

@article{GratToin21,
	title   = {Adaptive Regularization Minimization Algorithms with Nonsmooth Norms},
	author  = {Gratton, S. and Toint, Philippe L.},
	journal = {IMA Journal of Numerical Analysis},
	volume  = {43},
	number  = {3},
	pages   = {1313--1340},
	year    = 2023
}

@article{Mishchenko2023,
	title   = {Regularized {Newton} Method with Global $O(1/k^2)$ Convergence},
	author  = {Mishchenko, Konstantin},
	journal = {SIAM Journal on Optimization},
	volume  = {33},
	number  = {3},
	pages   = {1440--1462},
	year    = 2023
}

@article{GraJerToin24,
	title   = {Yet Another Fast Variant of {Newton's} Method for Nonconvex Optimization},
	author  = {Gratton, Serge and Jerad, Sadok and Toint, Philippe L.},
	journal = {IMA Journal of Numerical Analysis},
	year    = 2024,
		volume  = {45},
	number  = {2},
		pages   = {971--1008},
			year    = 2025
}

@article{OFFO-ARp,
	title   = {Convergence Properties of an Objective-Function-Free Optimization Regularization Algorithm, Including an $\mathcal{O}(\epsilon^{-3/2})$ Complexity Bound},
	author  = {Gratton, Serge and Jerad, Sadok and Toint, Philippe L.},
	journal = {SIAM Journal on Optimization},
	volume  = {33},
	number  = {3},
	pages   = {1621--1646},
	year    = 2023
}

@article{CartGoulToin19,
	title   = {Universal Regularization Methods: Varying the Power, the Smoothness and the Accuracy},
	author  = {Cartis, Coralia and Gould, Nicholas I. M. and Toint, Philippe L.},
	journal = {SIAM Journal on Optimization},
	volume  = {29},
	number  = {1},
	pages   = {595--615},
	year    = 2019
}

@book{Yap99,
	title     = {Fundamental Problems of Algorithmic Algebra},
	author    = {Yap, Chee Keng},
	publisher = {Oxford University Press},
	address   = {New York, NY},
	year      = 1999
}

@article{GraJerToin25,
	title   = {A Fast {Newton} Method under Local {Lipschitz} Smoothness},
	author  = {Gratton, Serge and Jerad, Sadok and Toint, Philippe},
	journal = {EURO Journal on Computational Optimization},
	volume  = {14},
	pages   = {100--128},
	year    = 2026
}

@article{CartisGoulToint2012,
	title   = {Evaluation Complexity of Adaptive Cubic Regularization Methods for Convex Unconstrained Optimization},
	author  = {Cartis, Coralia and Gould, Nicholas I. M. and Toint, Philippe L.},
	journal = {Optimization Methods and Software},
	volume  = {27},
	number  = {2},
	pages   = {197--219},
	year    = 2012
}

@article{Xieyinu24,
	title   = {Trust Region Methods for Nonconvex Stochastic Optimization beyond {Lipschitz} Smoothness},
	author  = {Xie, Chenghan and Li, Chenxi and Zhang, Chuwen and Deng, Qi and Ge, Dongdong and Ye, Yinyu},
	journal = {Proceedings of the AAAI Conference on Artificial Intelligence},
	volume  = {38},
	number  = {14},
	pages   = {16049--16057},
	year    = 2024
}

@article{ZhouMa25,
	title   = {{AdaBB}: Adaptive {Barzilai-Borwein} Method for Convex Optimization},
	author  = {Zhou, Danqing and Ma, Shiqian and Yang, Junfeng},
	journal = {Mathematics of Operations Research},
	  volume = {51},
	number = {1},
	year = {2026},
	pages = {715–745}
}

@article{Bauschke2017,
	title   = {A Descent Lemma Beyond {Lipschitz} Gradient Continuity: First-Order Methods Revisited and Applications},
	author  = {Bauschke, Heinz H. and Bolte, J{\'e}r{\^o}me and Teboulle, Marc},
	journal = {Mathematics of Operations Research},
	volume  = {42},
	number  = {2},
	pages   = {330--348},
	year    = 2017
}

@article{Lu2018,
	title   = {Relatively Smooth Convex Optimization by First-Order Methods, and Applications},
	author  = {Lu, Haihao and Freund, Robert M. and Nesterov, Yurii},
	journal = {SIAM Journal on Optimization},
	volume  = {28},
	number  = {1},
	pages   = {333--354},
	year    = 2018
}

@inproceedings{MalMischenko24,
	title     = {Adaptive Proximal Gradient Method for Convex Optimization},
	author    = {Malitsky, Yura and Mishchenko, Konstantin},
	booktitle = {Advances in Neural Information Processing Systems},
	volume    = {37},
	articleno = {3193},
	numpages  = {28},
	year      = 2024
}

@inproceedings{ZhangFang20,
	title     = {Improved Analysis of Clipping Algorithms for Non-Convex Optimization},
	author    = {Zhang, Bohang and Jin, Jikai and Fang, Cong and Wang, Liwei},
	booktitle = {Advances in Neural Information Processing Systems},
	volume    = {33},
	articleno = {1301},
	numpages  = {11},
	year      = 2020
}

@InProceedings{pmlr-v206-sun23d,
	title     = {Convergence of {Stein} Variational Gradient Descent under a Weaker Smoothness Condition},
	author    = {Sun, Lukang and Karagulyan, Avetik and Richtarik, Peter},
	booktitle = {Proceedings of The 26th International Conference on Artificial Intelligence and Statistics},
	series    = {Proceedings of Machine Learning Research},
	pages     = {3693--3717},
	year      = 2023,
	volume    = {206}
}

@InProceedings{pmlr-v195-faw23a,
	title     = {Beyond Uniform Smoothness: A Stopped Analysis of Adaptive {SGD}},
	author    = {Faw, Matthew and Rout, Litu and Caramanis, Constantine and Shakkottai, Sanjay},
	booktitle = {Proceedings of Thirty Sixth Conference on Learning Theory},
	series    = {Proceedings of Machine Learning Research},
	pages     = {89--160},
	year      = 2023,
	volume    = {195}
}

@INPROCEEDINGS{ReiszeiJadb25,
	title     = {Variance-Reduced Clipping for Non-Convex Optimization},
	author    = {Reisizadeh, Amirhossein and Li, Haochuan and Das, Subhro and Jadbabaie, Ali},
	booktitle = {ICASSP 2025 - 2025 IEEE International Conference on Acoustics, Speech and Signal Processing (ICASSP)},
	year      = 2025
}

@article{Doikov2021,
	title   = {Local Convergence of Tensor Methods},
	author  = {Doikov, Nikita and Nesterov, Yurii},
	journal = {Mathematical Programming},
	volume  = {193},
	number  = {1},
	pages   = {315--336},
	year    = 2022
}

@inproceedings{Semenovetal25,
	title     = {Gradient-Normalized Smoothness for Optimization with Approximate Hessians},
	author    = {Semenov, Andrei and Jaggi, Martin and Doikov, Nikita},
	booktitle = {The Fourteenth International Conference on Learning Representations},
	year      = 2026
}

@article{DoikovMischNes24,
	title   = {Super-Universal Regularized {Newton} Method},
	author  = {Doikov, Nikita and Mishchenko, Konstantin and Nesterov, Yurii},
	journal = {SIAM Journal on Optimization},
	volume  = {34},
	number  = {1},
	pages   = {27--56},
	year    = 2024
}

@article{Doikov2025,
	title   = {Minimizing Quasi-Self-Concordant Functions by Gradient Regularization of {Newton} Method},
	author  = {Doikov, Nikita},
	journal = {Mathematical Programming},
	year    = 2025,
}

@article{cartis2025efficient,
	title   = {Efficient Implementation of Third-Order Tensor Methods with Adaptive Regularization for Unconstrained Optimization},
	author  = {Cartis, Coralia and Hauser, Raphael and Liu, Yang and Welzel, Karl and Zhu, Wenqi},
	journal = {Mathematical Programming Computation},
	year    = 2026
}

@article{Beaton1974,
	title   = {The Fitting of Power Series, Meaning Polynomials, Illustrated on Band-Spectroscopic Data},
	author  = {Beaton, Albert E. and Tukey, John W.},
	journal = {Technometrics},
	volume  = {16},
	number  = {2},
	pages   = {147--185},
	year    = 1974
}

@InProceedings{pmlr-v70-carmon17a,
	title     = {``{Convex} Until Proven Guilty'': Dimension-Free Acceleration of Gradient Descent on Non-Convex Functions},
	author    = {Carmon, Yair and Duchi, John C. and Hinder, Oliver and Sidford, Aaron},
	booktitle = {Proceedings of the 34th International Conference on Machine Learning},
	series    = {Proceedings of Machine Learning Research},
	pages     = {654--663},
	year      = 2017,
	volume    = {70}
}

@article{candes2015phase,
	author = {Candès, Emmanuel J. and Li, Xiaodong and Soltanolkotabi, Mahdi},
	title = {Phase Retrieval via Wirtinger Flow: Theory and Algorithms},
	journal = {IEEE Transactions on Information Theory},
	volume = {61},
	number = {4},
	pages = {1985--2007},
	year = {2015},
	doi = {10.1109/TIT.2015.2399924}
}
}
\appendix
\section{Missing Proofs}
\subsection{Proof of Lemma~\ref{caractAS3}}\label{firstmsproof}
Let $h(t)$ be defined as $h(t) \eqdef \nabla_x^p f(x+t(y-x))$, $t \in [0,1]$ then $h^\prime(t) = \nabla_x^{p+1}f(x+t(y-x))[y-x]$, Then we have that from \eqref{pplusone} and \eqref{L0L1}
\begin{align}\label{appen1}
\|\nabla_x^p f(y) - \nabla_x^p f(x)\| &= \|h(1)  - h(0)\| \nonumber \\
&= \|\int_{0}^{1} \nabla_x^{p+1} f(x+t(y-x))[y-x] \, dt\| \nonumber \\
&\leq \int_{0}^1 \|\nabla_x^{p+1} \nonumber f(x+t(y-x))[y-x] \|  dt \\
&\leq \int_{0}^1 (M_0 + M_1 \|\nabla_x^1f(x+t(y-x))\|) \|y-x\| dt \nonumber \\
&= M_0 \|y-x\| + M_1 \|y-x\| \int_{0}^1  \|\nabla_x^1f(x+t(y-x))\|  dt 
\end{align}
We now move to provide a bound on $\|\nabla_x^1f(x+t(y-x))\|$ for $t \in [0,1]$.
Suppose that $\|x-y\| \leq \frac{1}{G_1}$. By using \eqref{L0L1}, \cite[Lemma~2.5]{Vankovetal25} and that $e^u \leq 1+2u$ for $u \in [0,1]$, we derive that
 \begin{align*}
\|\nabla_x^1f(x+t(y-x))\| &\leq \|\nabla_x^1f(x+t(y-x)) - \nabla_x^1 f(x)\|+  \|\nabla_x^1f(x)\| \\
&\leq \|\nabla_x^1f(x)\| + (G_0+G_1\|\nabla_x^1f(x)\|) \frac{e^{G_1 t \|(y-x)\|}-1}{G_1} \\
&\leq \|\nabla_x^1f(x)\| + 2G_0 t \|y-x\| + 2G_1  \|\nabla_x^1f(x)\| t\|y-x\| \\
&\leq  \|\nabla_x^1 f(x)\| (1+2t) +2 t\frac{G_0}{G_1},   
 \end{align*}
 where we used that $ \|y-x\| \leq  \frac{1}{G_1}$ to derive the last inequality.
 Now injecting the last inequality in \eqref{appen1} and evaluating the integral in $t$, we obtain for $\|x-y\| \leq \frac{1}{G_1}$,
 \[
 \|\nabla_x^p f(y) - \nabla_x^p f(x)\|  \leq M_0 \|y-x\| + 2M_1 \|y-x\| \|\nabla_x^1 f(x)\| +  \frac{M_1G_0}{G_1} \|y-x\|,  
 \] 
 which is the statement of Lemma~\ref{caractAS3}.
 \subsection{Proof of Lemma~\ref{approxfungrad}}
 Let $x,s \in \Ren$ with $\|s\| \leq \delta$. From \cite[Theorem~A.7.1]{Cartis2022-wb}, tensor inequalities and Assumption~\ref{assum3}, we derive that, 
 \begin{align*}
\left|f(x+s) - T_{f,p}(x,s)\right| &= \left|\frac{1}{(p-1)!} \int_{0}^1 (1-t)^{p-1} (\nabla_x^p f(x+ts) - \nabla_x^p f(x))[s]^p dt \right| \\
&\leq \frac{1}{(p-1)!} \int_{0}^1 (1-t)^{p-1}  \|\nabla_x^p f(x+ts) - \nabla_x^p f(x)\| \|s\|^p dt \\
&\leq \frac{\|s\|^{p+1}}{(p-1)!} \int_{0}^1 (L_0 + L_1 \|\nabla_x^1 f(x)\|) t  (1-t)^{p-1} dt = \frac{L_0 + L_1 \|\nabla_x^1 f(x)\| }{(p+1)!} \|s\|^{p+1}.
 \end{align*}
 We know move to the second part. Again, as in  \cite[Theorem~A.7.1]{Cartis2022-wb}, tensor inequalities and Assumption~\ref{assum3},
 \begin{align*}
 	\|\nabla_x^1 f(x+s) - \nabla_s^1 T_{f,p}(x,s)\| &\leq \frac{1}{(p-2)!} \int_{0}^1 (1-t)^{p-2} \|\nabla_x^p f(x+ts) - \nabla_x^p f(x)\| \|s\|^{p-1} dt \\
 	&\leq \frac{\|s\|^p}{(p-2)!} \int_{0}^1 (1-t)^{p-2} t (L_0 + L_1 \|\nabla_x^1 f(x)\|) dt \\ 
 	&= \frac{\|s\|^p}{p!} (L_0 + L_1 \|\nabla_x^1f(x)\|),
 \end{align*} 
 thus giving the second part of the Lemma.
\subsection{Proof of Lemma~\ref{Skgboundconvex}}\label{thirdmsproof}
\begin{proof}
First note that if $k \in \calS_k^{g \searrow}$, $\|g_{k+1}\| \leq \frac{\|g_k\|}{2}$. Let $k \in \calS_k^{decr}$. Using now that \eqref{testnextgrad} does not hold, that $ \frac{\epsilon}{D} \leq \|g_k\|$ since $\Delta_k \geq \epsilon$ and \eqref{Deltakgk} holds, $\max(\varsigma, \epsilon) \leq 1$, and that \eqref{exprskbound} applies, we derive that,
\begin{equation}\label{gkplusovergkconvex}
	\frac{\|g_{k+1}\|}{\|g_k\|} \leq \kap{upgrad} \max(\frac{\varsigma}{\|g_k\|}, 1) \sigma_{k} \|s_k\|^{p} \leq \kap{upgrad} \max(\frac{\varsigma D}{\epsilon}, 1) \sigma_{k} \|s_k\|^{p} \leq \frac{D\kap{upgrad}  \mu^{p}}{\epsilon} = \frac{D\kap{up}}{\epsilon},
\end{equation}
where $\kap{up}$ is defined in \eqref{kapupdef}.
Successively using that $\calS_k = \calS_k^{decr} \cap  \calS_k^{g \searrow}$, the bound on $\frac{\|g_{k+1}\|}{\|g_k\|}$ in both cases either $i \in \calS_k^{decr}$ \eqref{gkplusovergkconvex} or $i \in \calS_k^{g \searrow}$ and that $\|g_k\| \geq \frac{\epsilon}{D}$ before termination from \eqref{Deltakgk}, we obtain that
\begin{align*}
	\frac{\epsilon}{D\|g_0\|} &\leq \frac{\|g_k\|}{\|g_0\|} = \prod_{i \in \calS_k \setminus \{k\}} \frac{\|g_{i+1}\|}{\|g_i\|} = \prod_{i \in \calS_k^{decr} \setminus \{k\} } \frac{\|g_{i+1}\|}{\|g_i\|} \prod_{i \in \calS_k^{g\searrow} \setminus \{k\}} \frac{\|g_{i+1}\|}{\|g_i\|} \\
	&\leq \left(\frac{D\kap{up}}{\epsilon}\right)^{\calS_k^{decr} \setminus \{k\}} \left(\frac{1}{2}\right)^{ \calS_k^{g\searrow} \setminus \{k\}}.
\end{align*}
Rearranging the last inequality and using that $|\calS_k^{decr} \setminus \{k\}| \leq |\calS_k^{decr}|$ yields that
\[
\frac{2^{ \calS_k^{g\searrow} \setminus \{k\} } \epsilon}{D\|g_0\|} \leq \left(\frac{D\kap{up}}{\epsilon}\right)^{\calS_k^{decr}}.
\]
Taking the logarithm in the last inequality, using $|\calS_k^{g\searrow} \setminus \{k\}| \geq |\calS_k^{g\searrow}| - 1$ and further rearranging yields the stated result \eqref{Skgboundexprconvex}.
\end{proof}
\end{document}